\pdfoutput=1 % FOR ARXIV
\documentclass[a4paper]{article}
\usepackage{amsthm,amssymb,amsmath,enumerate,graphicx,tikz-cd}

\DeclareMathOperator{\Ker}{Ker}
\DeclareMathOperator{\Ima}{Im}
% rank of a free module

\newtheorem{THM}{Theorem}[section]
\newtheorem{LEM}[THM]{Lemma}
\newtheorem{COR}[THM]{Corollary}

\newtheorem{PROP}[THM]{Proposition}

\newtheorem{EX}[THM]{Example}
\newtheorem{PROB}[THM]{Problem}

\def\specrelabove#1#2{\mathrel{\mathop{\kern0pt #1}\limits^{#2}}}
\def\specrelbelow#1#2{\mathrel{\mathop{\kern0pt #1}\limits_{#2}}}
\def\restricts{\!\restriction\!}
\newcommand\B{\mathcal B}
\def\C{\mathcal C}
\def\D{\mathcal D}
\def\E{\mathcal E}

\newcommand\FF{\mathbb F}

\newcommand\U{\mathcal U}
\newcommand\V{\mathcal V}
\newcommand\R{\mathbb R}
\newcommand\Z{\mathbb Z}

\def\lowfwd #1#2#3{{\mathop{\kern0pt #1}\limits^{\kern#2pt\raise.#3ex \vbox to 0pt{\hbox{$\scriptscriptstyle\rightarrow$}\vss}}}}

\def\lowbkwd #1#2#3{{\mathop{\kern0pt #1}\limits^{\kern#2pt\raise.#3ex
\vbox to 0pt{\hbox{$\scriptscriptstyle\leftarrow$}\vss}}}}

\def\vSdash{{\mathop{\kern0pt S\lower-1pt\hbox{${}% logically \v(e')
     \scriptstyle'$}}\limits^{\kern2pt\raise.1ex
     \vbox to 0pt{\hbox{$\scriptscriptstyle\rightarrow$}\vss}}}}

\def\vsdash{{\mathop{\kern0pt s\lower.5pt\hbox{${}% logically \v(e')
     \scriptstyle'$}}\limits^{\kern0pt\raise.02ex
     \vbox to 0pt{\hbox{$\scriptscriptstyle\rightarrow$}\vss}}}}
\def\svdash{{\mathop{\kern0pt s\lower.5pt\hbox{${}% logically \v(e')
     \scriptstyle'$}}\limits^{\kern0pt\raise.02ex
     \vbox to 0pt{\hbox{$\scriptscriptstyle\leftarrow$}\vss}}}}

\def\vxdash{{\mathop{\kern0pt x\lower.5pt\hbox{${}% logically \v(e')
     \scriptstyle'$}}\limits^{\kern0pt\raise.02ex
     \vbox to 0pt{\hbox{$\scriptscriptstyle\rightarrow$}\vss}}}}
\def\xvdash{{\mathop{\kern0pt x\lower.5pt\hbox{${}% logically \v(e')
     \scriptstyle'$}}\limits^{\kern0pt\raise.02ex
     \vbox to 0pt{\hbox{$\scriptscriptstyle\leftarrow$}\vss}}}}

\def\es{\emptyset}
\def\sub{\subseteq}
\def\supe{\supseteq}
\def\sm{\smallsetminus}

\def\partialone{{\partial}}% reset to {\partial_1} if desired
\def\deltanought{{\delta}}% reset to {\delta^0} if desired
\def\gammanought{{\gamma}}% reset to {\gamma_0} if desired
\def\gammaone{{\gamma}}% reset to {\gamma_1} if desired

\newcommand\COMMENT[1]{}
%\def\COMMENT#}
%
%   The \? macro below puts the argument #1 in the left margin. Examples: \??, \?{What nonsense!}.
\def\?#1{\vadjust{\vbox to 0pt{\vss\vskip-8pt\leftline{%
     \llap{\hbox{\vbox{\pretolerance=-1
     \doublehyphendemerits=0\finalhyphendemerits=0
     \hsize30truemm\tolerance=10000\small
     \lineskip=0pt\lineskiplimit=0pt
     \rightskip=0pt plus16truemm\baselineskip8pt\noindent
     \hskip0pt        %(without this, the first word is never hyphenated!)
     #1\endgraf}\hskip7truemm}}}\vss}}}
\def\?#1{}

\def\noproof{{\unskip\nobreak\hfill\penalty50\hskip2em\hbox{}\nobreak\hfill%
       $\square$\parfillskip=0pt\finalhyphendemerits=0\par}\goodbreak}
\def\endproof{\noproof\bigskip}
\def\looseproof#1{\bigbreak\noindent {{\bf #1.}}}

\title{Homological aspects of oriented hypergraphs}%
   \COMMENT{}
 \author{Reinhard Diestel}
 \date{\today}

\begin{document}
\abovedisplayshortskip=-3pt plus3pt
\belowdisplayshortskip=6pt

\maketitle

\begin{abstract}\noindent
  We define an algebraic setup of homology for hypergraphs, which defaults to simplicial homology in the case of graphs, and study its basic properties. As part of our study we define algebraic spanning trees of hypergraphs, along with fundamental cuts and cycles playing their usual roles.  
   \end{abstract}

\section{Introduction}

The homology of finite graphs, better known in graph theory as the study of the cycle and cut spaces of a graph and their interaction, is both well known and still of interest as a useful tool in sometimes unexpected contexts.%
   \COMMENT{}
   Because of the added structure offered by graphs in terms of spanning trees,%
   \COMMENT{}
   there is more to it than what topologists might consider as the 1-dimensional case of simplicial homology. This is the case particularly for integer rather than real coefficients; see Biggs~\cite{Biggs, BiggsCriticalGroup} for an overview of some hidden depths and surprising connections ranging from chip-firing games to algebraic geometry.

$\!$By contrast, it seems that the natural notion for the homology of hypergraphs which extends that for graphs has never been introduced, let alone studied.%
   \footnote{Oriented hypergraphs have been investigated for their algebraic properties from a spectral point of view over the reals; see e.g.~\cite{JostMulasAIM2019}. Unoriented $k$-uniform hypergraphs on a set~$V\!$ have been studied in homological terms by viewing them as $(k-1)$-chains of the simplicial complex~$2^V\!$ over~$\FF_2$;%
   \COMMENT{}
   see e.g.~\cite{ChungGrahamHypergraphCohomology,CsakanyKahn1999}. Analogues of spanning trees in higher-dimensional complexes have been studied (in particular: counted, so as to generalise Cayley's formula or the matrix-tree theorem) by various authors following~\cite{KalaiSpanningTrees}.%
   \COMMENT{}
   I have been told, but not verified, that much of what is proved in this paper using just elementary linear algebra can be deduced from more general results on the homology of cell complexes developed in~\cite{DKMCellComplexes2015}.}
   The reason may be that, unlike graphs, oriented hypergraphs (which are needed to set up any homology not just over~$\FF_2$) are not just the usual unoriented hypergraphs with an `orientation' added. Or it may be that hypergraphs are not examples of simplicial complexes and thereby naturally endowed with a homology. But there is a boundary operator for oriented hypergraphs that is just as natural as that for oriented graphs, and which defaults to the latter when the hypergraph is a graph. The resulting homology for hypergraphs~-- just in dimensions 0 and~1, as for graphs~-- extends the cycle/cut space theory for graphs in many ways, and differs from it in other ways. It is the aim of this paper to introduce this natural homology and establish its basic properties.

Although our focus will be on the fundamentals of hypergraph homology rather than on applications, it was through a particular application that I became interested in this topic. I was studying duality aspects of set partitions and their tangles~\cite{ASSduality, Focus} in order to describe the purchasing\vadjust{\penalty-200} behaviour of customers in an imaginary online shop~\cite{TangleBook, ASSduality}. This has duality aspects in that one can look at which items a typical customer buys, but also at which customers buy a typical item. It turned out that this duality was a case of the duality between homology and cohomology of such partition systems when suitably set up, which in turn is a special case%
   \COMMENT{}
   of the natural hypergraph homology to be introduced here.\looseness=-1

In addition to setting up the basics of hypergraph homology and asking such questions as how their homology and cohomology groups relate,%
   \COMMENT{}
   we shall focus on how much of the familiar cycle and cut space theory of graphs carries over to hypergraphs. This will lead us to a definition of `algebraic spanning trees' of hypergraphs, complete with fundamental cuts and cycles. We shall prove that all hypergraphs have such spanning trees over the reals, but not necessarily over the integers. Over the reals, such `spanning trees' exist even in  general vector spaces, which sheds a perhaps unexpected graph-theoretic light on the otherwise familiar duality between linear maps.

Several open problems remain. Some of these concern hypergraph homology in general, but most take the form of asking which structural properties of hypergraphs might imply desirable homological properties not shared by all hypergraphs.

There are few prerequisites for reading this paper. Although most of our proofs will be algebraic in nature, I have made a point of keeping them completely elementary so that their analogy to the graph case, wherever it exists or fails, is most transparent. It is be possible to translate the (very basic) theory developed here into matrix language and prove the resulting assertions there, using algebraic tools such as the Smith normal form~\cite{BiggsCriticalGroup,StanleySmith} to make some of the proofs (though not all) quite short. I~believe that this would not be in the interest of the intended readership, which is graph theorists familiar with the cycle/cut space theory for graphs and looking for analogues describing hypergraphs. As the paper is written, it assumes familiarity only with the basic notions of homological algebra, such as chain complexes and exact sequences, and some basic module theory. A~key tool will be the elementary divisor theorem for free $\Z$-modules, which is recalled and applied in an easily verifiable way.\looseness=-1

\section{Basic terminology}

An {\em oriented hypergraph\/}~$H$ in this paper is a pair $(V,E)$ of a finite set~$V\!$ and a set~$E$ of ordered pairs~$(A,B)$ of disjoint subsets of~$V\!$.%
   \COMMENT{}
   The elements of~$V\!$ are the {\em vertices\/} of~$H$; the elements of~$E$ are its (oriented) {\em edges\/}. The elements of~$A$ are the {\em initial vertices\/} of the edge $e=(A,B)$; the elements of~$B$ are its {\em terminal vertices\/}. This definition first appeared in~\cite{DirectedHypergraphsAndApps} and appears to be standard now.%
   \COMMENT{}

As with oriented graphs~\cite{DiestelBook16} we shall assume that $E$ is asymmetric, in that it does not contain both an edge $e=(A,B)$ and its {\em inverse\/} $e^* = (B,A)$. The reason for this is not that pairs of inverse edges will not play a role in our context. On the contrary, they are so ubiquitous that we need to keep track of them: when we pick two arbitrary edges $e,f\in E$ we want to be sure that they are not accidentally inverse to each other, while if we do wish to consider the inverse of an edge~$e$ we want to refer to it in a way that relates it to~$e$, e.g.\ as~$e^*$.

In fact, in our algebraic context it will rarely be necessary to refer inverse edges by name. The edges~$e$ of~$H$ will be the generators of its chain group~$C_1$ and thus have a group inverse there,~$-e$, which is usually all we need to refer to.\looseness=-1 %
  \COMMENT{}

\section{Chains and cochains}\label{sec:chains}

Given a hypergraph $H = (V,E)$, let $C_0$, $C_1$ and $C_2$ denote the free abelian groups, or $\Z$-modules, with bases $V\!$, $E$, and~$\es$, respectively. We write the elements of~$C_0$, the 0-{\em chains\/} of our hypergraph, as sums $\sum n_i v_i$ of elements~$v_i$ of~$V\!$ with integer coefficients $n_i\in\Z$. The elements of~$C_1$, its 1-{\em chains\/}, are the sums $\sum n_i e_i$ of edges~$e_i\in E$ with integer coefficients. The set $C_2 = \{0\}$ of `2-chains' consists only of the empty sum.\looseness=-1

As {\em boundary homomorphisms\/} we take the map $\partial_2\colon 0\mapsto 0$ from $C_2$ to~$C_1$ and the homomorphism $\partial_1\colon C_1\to C_0$ that sends every $e=(A,B)\in E$ to the 0-chain $\sum_{v\in B} v - \sum_{v\in A} v$%
  \COMMENT{}
  (and extends linearly to all of~$C_1$). An arbitrary~$v\in V\!$ thus has a coefficient in~$\partial_1(A,B)$ of~1 if $v\in B$, of $-1$ if $v\in A$, and of~0 otherwise. Informally, the boundary of an edge consists of the vertices it points to minus the vertices it points away from (Figure~\ref{bdry}). When $H$ is a graph, this coincides with the usual definition for simplicial complexes.

\begin{figure}[ht]
 \center
   \includegraphics[scale=1]{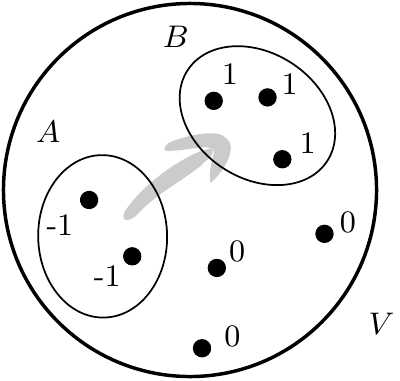}
\caption{Boundary coefficients for an edge~$e=(A,B)$}
\label{bdry}
\end{figure}

If desired, the boundary homomorphism~$\partial_1$ can be described in the usual way by an $n\times m$ matrix $B$ with entries $1,0,-1$,%
   \COMMENT{}
   where $n=|V|$ and $m=|E|$. For a 1-chain $x = \sum_{j=1}^m \alpha_j e_j$ we then have $\partial_1 x = \sum_{i=1}^n \beta_i v_i$ for $Ba=b$, where $a = (\alpha_1,\dots,\alpha_m)$ and $b = (\beta_1,\dots,\beta_n)$.%
  \COMMENT{}

For $n=0,1,2$, the elements of $C^n = {\rm Hom}(C_n,\Z)$ are the $n$-cochains of our hypergraph. We usually define these homomorphisms explictly only on the singleton chains in~$C_n$, i.e., on individual elements of~$V\!$ or of~$E$ when $n=0$ or $n=1$, and extend these maps linearly to all of~$C_n$. We write $C^2 = \{0\}$, where 0~denotes the unique homomorphism $0\mapsto 0$ from~$C_2$ to~$\Z$.

The {\em coboundary homomorphisms\/} for $n=0,1$ are the maps $\delta^n\colon  C^n\to C^{n+1}$ that send an $n$-cochain $\varphi\colon C_n\to\Z$ to the $(n+1)$-cochain $(\varphi\circ\partial_{n+1})\colon C_{n+1}\to\Z$. For example, $\delta^0$~sends $\varphi\in C^0$ to the homomorphism $\psi\colon C_1\to\Z$ that maps every $x\in C_1$ to the image of~$\partial_1 x$ under~$\varphi$. If $\partial_1$ is described by a matrix~$B$ as earlier, then $\delta^0$ is described by its transpose~$B^\top$.%
   \COMMENT{}
   We write~$\varphi_v$ for the unique element of~$C^0$ that sends $v\in V\!$ to~1 and all the other vertices to~0, and $\psi_e$ for the unique element of~$C^1$ that sends $e\in E$ to~1 and all the other edges to~0.

Chains can be turned into cochains simply by interpreting their coefficients as images of the basis element to which they are assigned. Let $\gamma_0$ be the isomorphism $C_0\to C^0$ that maps every $v\in V\!$ to~$\varphi_v$, and $\gamma_1$ the isomorphism $C_1\to C^1$ that maps every $e\in E$ to~$\psi_e$. Thus, $\gamma_0$ maps $\sum_{v\in V} n_v v\in C_0$ to the 0-cochain that sends each $v\in V\!$ to~$n_v$ (and extends linearly to all of~$C_0$), and similarly for~$\gamma_1$.%
  \COMMENT{}

\begin{figure}[ht]
 \center\vskip-6pt
\begin{tikzcd}
0
\arrow[r] 
& C_1
\arrow[r, "\partialone"] \arrow[d, "\gammaone"']
& C_0
\arrow[r] \arrow[d, "\gammanought"]
&0 \\
0
%\arrow[u]
& C^1 \arrow[l]
& C^0 \arrow[l, "\deltanought"]
& 0 \arrow[l] %\arrow[u]
\end{tikzcd}
\caption{Boundary and coboundary maps}
\label{chain diagram}
\vskip-\medskipamount
\end{figure}

To avoid clutter, we shall drop the indices 0, 1 or~2 of the maps $\partial$, $\delta$ and~$\gamma$ when they can be understood from the context, as in Figure~\ref{chain diagram}.

For $x\in C_1$ we shall use the abbreviations of
$$\varphi_{\partial x} := (\gammanought\circ \partialone)(x)\in C^0\quad\text{and}\quad \psi_{\partial x} := (\deltanought\circ\gamma\circ\partial)(x) = \delta(\varphi_{\partial x}) \in C^1.$$
For $e\in E\sub C_1$ we thus have $\varphi_{\partial e} (v) = 1$ if $e$ points towards~$v$, and $\varphi_{\partial e} (v) = -1$ if $e$ points away from~$v$. This is compatible with our earlier definition of $\varphi_v$, since if $\partial x = v\in V\sub C_0$%
   \COMMENT{}
   then $\varphi_{\partial x}$ coincides with~$\varphi_v$ as defined earlier.%
   \COMMENT{}

\section{The boundary inner product}\label{sec:innerproduct}%
   \COMMENT{}

Given a chain~$x\in C_1$, consider $\psi_{\partial x} = (\deltanought\circ\gammanought\circ\partialone)(x)$. This is a 1-cochain, a homomorphism $C_1\to\Z$. Thus, $\psi_{\partial x}$ maps every chain $y\in C_1$ to some integer $\psi_{\partial x}(y)\in\Z$. Let us denote this integer by
$$\langle x,y \rangle_\partial := \psi_{\partial x}(y)\in\Z.$$
The form $\langle\ ,\ \rangle_\partial$ is easily seen to be bilinear.%
  \COMMENT{}
  It is also symmetric:

\begin{LEM}\label{symmetry}%
   \COMMENT{}
Let $x,y\in C_1$. Write $\alpha_v$ and~$\beta_v$ for the coefficients of all the~$v\in V\!$ in $\partialone x$ and $\partialone y$, respectively, so that $\partialone x = \sum_{v}\alpha_v v$ and $\partialone y = \sum_{v}\beta_{v} v$. Then\smallskip
\begin{displaymath}\textstyle
  \langle x,y\rangle_\partial = \sum_{v\in V} \alpha_v\beta_v = \langle y,x\rangle_\partial.
\end{displaymath}
\end{LEM}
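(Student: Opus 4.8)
The plan is to prove both equalities purely by unwinding the chain of definitions given just above the statement; the only substantive point is that the coboundary map $\delta$ acts by \emph{precomposition} with the boundary map $\partial$. First I would expand the left-hand side: by definition $\langle x,y\rangle_\partial = \psi_{\partial x}(y)$, and $\psi_{\partial x} = \delta(\varphi_{\partial x})$. Since $\delta$ sends a $0$-cochain $\varphi$ to $\varphi\circ\partial$, this gives
$$\langle x,y\rangle_\partial = (\delta\varphi_{\partial x})(y) = (\varphi_{\partial x}\circ\partial)(y) = \varphi_{\partial x}(\partial y).$$

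Next I would evaluate $\varphi_{\partial x}$ explicitly. By definition $\varphi_{\partial x} = \gamma(\partial x) = \gamma\bigl(\sum_v \alpha_v v\bigr)$, and $\gamma$ maps a $0$-chain $\sum_v n_v v$ to the cochain sending each basis vertex $w\in V$ to its coefficient $n_w$. Hence $\varphi_{\partial x}(w)=\alpha_w$ for every $w\in V$. Substituting $\partial y = \sum_v \beta_v v$ and using linearity of $\varphi_{\partial x}$ then yields $\varphi_{\partial x}(\partial y) = \sum_v \beta_v\,\varphi_{\partial x}(v) = \sum_{v\in V}\alpha_v\beta_v$, which is the claimed middle expression.

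Finally, the second equality is immediate: the sum $\sum_{v\in V}\alpha_v\beta_v$ is visibly unchanged when the roles of $x$ and $y$ (hence of the coefficient families $(\alpha_v)$ and $(\beta_v)$) are interchanged, because multiplication in $\Z$ is commutative. Running the same computation with $x$ and $y$ swapped gives $\langle y,x\rangle_\partial = \sum_v\beta_v\alpha_v$, and the two sums agree term by term.

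I do not expect a genuine obstacle here; the whole statement is a definition chase. The one point requiring care is the direction in which the maps compose: the asymmetry apparent in the definition of $\langle\,,\,\rangle_\partial$ (where $\partial$ is applied to $x$ while $\delta$ produces a functional that is then fed $y$) dissolves precisely because $\delta$ is precomposition with $\partial$, which effectively transfers a second boundary operator onto $y$. Keeping track of this step faithfully — rather than, say, treating $\psi_{\partial x}$ as though a boundary of $y$ had already been taken — is the only place where a slip could occur.
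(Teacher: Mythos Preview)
Your proof is correct and follows essentially the same route as the paper: unwind $\langle x,y\rangle_\partial = \psi_{\partial x}(y) = \delta(\varphi_{\partial x})(y) = \varphi_{\partial x}(\partial y) = \gamma(\partial x)(\partial y)$, then expand using the description of~$\gamma$ on basis elements to obtain $\sum_v \alpha_v\beta_v$, and observe that this expression is symmetric. The only cosmetic difference is that the paper writes the final expansion via the Kronecker delta $\gamma(v)(v')=\delta_{vv'}$ and a double sum, whereas you use the equivalent description $\varphi_{\partial x}(w)=\alpha_w$ directly.
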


\bigbreak

\proof We have $\langle x,y \rangle_\partial = \psi_{\partial x} (y) = \deltanought(\varphi_{\partial x})(y) = \varphi_{\partial x} (\partialone y) = \gammanought(\partialone x)(\partial y)$. As $\gammanought(v)(v') = \delta_{vv'}$,%
   \footnote{This is the Kronecker symbol: $\delta_{vv'}=1$ if~$v=v'$, and $\delta_{vv'}=0$ if $v\ne v'$.}
 we obtain
\[\textstyle \langle x,y\rangle_\partial = \gammanought(\partialone x)(\partial y) = \gammanought \big(\sum_v \alpha_v v\big)\big(\sum_{v'} \beta_{v'} v'\big)%
   \COMMENT{}
   = \sum _{v,v'} \alpha_v\beta_{v'}\delta_{vv'} = \sum_v \alpha_v\beta_v\,,\]%
  \COMMENT{}
which is symmetric in $x$ and~$y$ by the definition of $\alpha_v$ and~$\beta_v$.
  \endproof

\begin{figure}[ht]
 \center
   \includegraphics[scale=1]{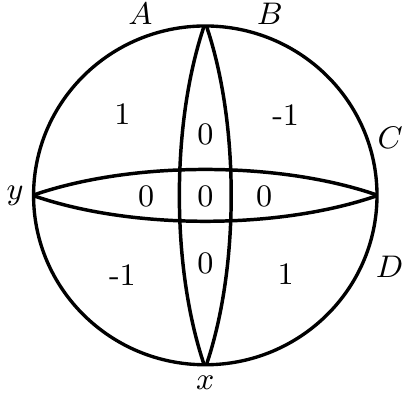}
\caption{The values of~$\alpha_v \beta_v$ for $x = (A,B)$ and $y = (C,D)$}
\label{fig:innerproduct}
\vskip-\medskipamount
\end{figure}

Figure~\ref{fig:innerproduct} illustrates Lemma~\ref{symmetry} when $x$ and~$y$ are single edges. In this case $\langle x,y \rangle_\partial$ measures how similar $x$ and~$y$ are: it counts the vertices on which $x$ and~$y$ agree (by both pointing towards or away from them), deducts the number of vertices on which they disagree,%
   \COMMENT{}
   and ignores the vertices that lie `outside' at least one of the edges $x$ and~$y$.

It is perhaps instructive to compare this with the canonical inner product%
   \COMMENT{}
   $\langle\ ,\ \rangle$ on~$C_1$.%
  \COMMENT{}
  For $x,y\in C_1$ with $x = \sum_i \lambda_i e_i$ and $y = \sum_i \mu_i e_i$, where the $e_i$ run over~$E$, we have $\langle x,y \rangle%
   \COMMENT{}
   = \gammaone(x)(y) =%
  \COMMENT{}
  \sum_i \lambda_i\mu_i$. When $x,y\in E$, then this just indicates whether or not $x$ and~$y$ are identical: $\langle x,y \rangle = \delta_{xy}$. As a similarity measure for two edges $x$ and~$y$, it is rather cruder than~$\langle x,y \rangle_\partial$ which, by Lemma~\ref{symmetry}, is the canonical inner product%
   \COMMENT{}
   of $\partialone x$ and $\partialone y$ rather than of $x$ and~$y$.\looseness=-1

Our $\langle\ ,\ \rangle_\partial$ is not an inner product on~$C_1$: when $x$ or $y$ is an algebraic {\em cycle\/}, an element of $\Ker\partialone$, then clearly $\langle x,y \rangle_\partial = 0$.%
  \COMMENT{}
  However, setting $\langle [x],[y]\rangle_\partial := \langle x,y \rangle_\partial$  yields a well-defined symmetric bilinear form on~$C_1/\Ker\partialone$, which is an inner product there:

\begin{LEM}\label{lem:innerproduct}
On $C_1/\Ker\partialone$, the form $\langle\ ,\ \rangle_\partial$ is an inner product.%
  \COMMENT{}
  \end{LEM}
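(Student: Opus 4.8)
The plan is to verify the three defining properties of a (positive-definite, symmetric, bilinear) inner product on the quotient module $C_1/\Ker\partialone$, taking each in turn and leaning on Lemma~\ref{symmetry} throughout.

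First I would confirm that $\langle [x],[y]\rangle_\partial$ is well-defined, i.e.\ independent of the chosen representatives. If $x' = x + z$ with $z\in\Ker\partialone$, then $\partialone x' = \partialone x$, so the coefficients $\alpha_v$ appearing in Lemma~\ref{symmetry} are unchanged, whence $\langle x',y\rangle_\partial = \langle x,y\rangle_\partial$; by the symmetry already established the same holds in the second argument. Symmetry and bilinearity of the induced form on the quotient then descend immediately from the corresponding properties of $\langle\ ,\ \rangle_\partial$ on~$C_1$.

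The remaining point~-- the one that actually forces the passage to the quotient~-- is positive definiteness. Here I would apply Lemma~\ref{symmetry} with $y=x$ to get $\langle [x],[x]\rangle_\partial = \sum_{v\in V}\alpha_v^2$, where $\partialone x = \sum_v \alpha_v v$. As a sum of squares of integers this is nonnegative, and it vanishes exactly when every $\alpha_v = 0$, that is, when $\partialone x = 0$. But $\partialone x = 0$ means $x\in\Ker\partialone$, i.e.\ $[x]=0$ in $C_1/\Ker\partialone$. Hence $\langle [x],[x]\rangle_\partial > 0$ for every nonzero class, which is precisely positive definiteness.

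I expect no real obstacle: the whole content is the observation that $\langle x,x\rangle_\partial$ equals the canonical inner product $\langle\partialone x,\partialone x\rangle$ of $\partialone x$ with itself in~$C_0$ (free on the basis~$V$), and that $\partialone$ induces an injection $C_1/\Ker\partialone\hookrightarrow C_0$ under which our form is the pullback of that canonical product. The only subtlety worth flagging is that, since we work over~$\Z$, ``inner product'' must be read as a positive-definite symmetric bilinear form, and the definiteness is witnessed \emph{integrally} by the sum-of-squares formula, with no recourse to real coefficients.
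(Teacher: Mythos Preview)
Your argument is correct and essentially matches the paper's: it too invokes Lemma~\ref{symmetry} to write $\langle x,x\rangle_\partial = \sum_v \alpha_v^2$ and concludes positive definiteness from the fact that this vanishes only when $x\in\Ker\partialone$. The paper treats well-definedness, symmetry and bilinearity as already established in the text preceding the lemma, so its proof is terser, but the substance is the same.
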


\proof
We only have to show that $\langle\ ,\ \rangle_\partial$ is positive definite. Given ${x\in C_1}$, let again $\partialone x =:\! \sum_v \alpha_v v$. If $x\notin\Ker\partialone$, the~$\alpha_v$ are not all zero, so ${\langle x,x \rangle_\partial =\! \sum_v \alpha_v^2 > 0}$ by Lemma~\ref{symmetry}.%
   \COMMENT{}
  \endproof

%RENAME $S$ AS $E$, ELEMENTS $r,s,t$ AS $e,f,g$, by global change after writing it all.\vskip1cm

\section{Cycles and cocycles}\label{sec:Cs}%
  \COMMENT{}

Viewed for simplicial complexes,%
  \COMMENT{}
  the diagram in Figure~\ref{cycle-cocyle diagram for graphs} describes some well-known isomorphisms related to the cycles and cocycles%
  \COMMENT{}
  in a connected%
  \COMMENT{}
  graph.%
  \COMMENT{}
  To put into perspective what comes later, let us briefly sketch a few well-known facts related to this diagram, without proofs. The facts summarised here are intended as background only, and will not be needed later except for comparison.

\begin{figure}[h]
 \center
\begin{tikzcd}
C_1\arrow [r, "\pi"]\arrow[d, "\gammaone"']
& C_1/\Ker\partialone
\arrow[r, "\bar\partial", "\simeq"'] \arrow[d, "\sigma"', "\simeq", rightarrow]
& \Ima\partialone\arrow [r, hookrightarrow]
 \arrow[d, "\simeq"', "\tau", rightarrow]
& C_0\arrow[d, "\gammanought"] \\
C^1\arrow [r, hookleftarrow]
& \Ima\deltanought
& C^0/\Ker\deltanought\arrow[l, "\bar\delta"', "\simeq"]
& C^0\arrow[l, "\pi"]
\end{tikzcd}
\caption{Cycles, cocycles, and their quotients in graphs}
\label{cycle-cocyle diagram for graphs}
\end{figure}%
  \COMMENT{}

Given a graph, we usually write $\E = C_1$ for its {\em edge space\/},%
  \footnote{When these spaces are considered with integer rather than $\FF_2$ coefficients, as we do here, they are sometimes referred to as the {\em oriented\/} edge, vertex, cycle and bond space.}
  $\V = C_0$ for its {\em vertex space\/}, $\C = \Ker\partialone$ for its {\em cycle space\/}, and $\B = \gammaone^{-1}(\Ima\deltanought)$ for its {\em bond space}.%
   \COMMENT{}
   The isomorphism $\sigma\colon\E/\C\to\gammaone(\B)$ indicated in Figure~\ref{cycle-cocyle diagram for graphs} is not unique,%
  \COMMENT{}
  but there is a standard way to choose it: pick a spanning tree~$T$, and let $\sigma$ map the cosets $f+\C$ for the oriented edges $f$ of~$T$ to the oriented fundamental cuts~$D_f\in\B$. It is not hard to check that this defines an isomorphism from $\E/\C$ to~$\B$.%
  \COMMENT{}

Once we have chosen an isomorphism~$\sigma$ between $C_1/\Ker\partialone$ and~$\Ima\deltanought$,%
  \COMMENT{}
  the isomorphism $\tau\colon\Ima\partialone\to C^0/\Ker\deltanought$ can be defined simply by composing the other three isomorphisms indicated in Figure~\ref{cycle-cocyle diagram for graphs}. (The two horizontal isomorphisms $\bar\partial$ and~$\bar\delta$ are the canonical ones that $\partialone$ and~$\deltanought$ induce on the quotients $C_1/\Ker\partialone$ and $C_0/\Ker\deltanought$ by the isomorphism theorem.)%
  \COMMENT{}

Defined in this way, $\tau$~has nothing to do with the standard isomorphism $\gammanought\colon C_0\to C^0$ from Figure~\ref{chain diagram},%
  \COMMENT{}
  which depends neither on~$\partialone$ nor on our choice of~$\sigma$. In particular, $\tau$~need not coincide with the map~$\pi\circ\gammanought\colon \Ima\partial_1 \to C^0/\Ker\deltanought$, where $\pi\colon C^0\to C^0/\Ker\deltanought$ is the canonical projection. In fact, it is not hard to construct examples where $\pi\circ\gammanought$ is not even surjective.%
   \COMMENT{}

When we take real coefficients, however, then $\pi\circ\gammanought\restricts\Ima\partial_1$ is another isomorphism. Indeed, it is not hard to show (with any coefficients) that $\Ima\partial_1$ consists of those 0-chains whose coefficients sum to zero,%
  \COMMENT{}
     while the kernel of~$\delta^0$ consists of those 0-cochains that are constant on~$V\!$.%
  \COMMENT{}
  Now, with either integer or real coefficients,%
   \COMMENT{}
   ${0\in C_0}$ is the only 0-chain whose coefficients sum to zero while being identical. Hence $\gammanought$ maps only ${0\in\Ima\partial_1}$ to~$\Ker\delta^0$, and hence remains injective when composed with~$\pi$.%
  \COMMENT{}
  When we take real coefficients and our modules are vector spaces, the existence of the isomorphism~$\tau$, which we deduced earlier from the existence of~$\sigma$, implies that ${\pi\circ\gammanought\colon\Ima\partial_1\to C^0/\Ker\deltanought}$ is surjective too, and hence an isomorphism.%
  \COMMENT{}

\medbreak

Let us now look at the analogous situation for hypergraphs, once more with integer coefficients. We no longer have natural isomorphisms~$\sigma$ now, but $C_1/\Ker\partialone$ still has an isomorphic counterpart inside the image of~$\delta^0$ obtained by composing the two canonical monomorphisms $\bar\partialone\colon C_1/\Ker\partialone\to C_0$ and $\bar\deltanought\colon C^0/\Ker\deltanought \to C^1$ with the isomorphism $\gammanought\colon C_0\to C^0$ from Figure~\ref{chain diagram}.

Indeed, let us write $\U$ for the image of~$\pi\circ\gammanought\circ\partialone$ in~$C^0/\Ker\deltanought$, and $\D$ for the image of~$\U$ in~$\Ima\deltanought\sub C^1$ under~$\bar\delta$. Put more directly,
$$\D := \Ima (\deltanought\circ\gammanought\circ\partial_1) = \{\,\psi_{\partial x}\mid x\in C_1\}$$
(Figure~\ref{cycle-cocyle diagram for sets}).

\begin{figure}[h]
% \vskip-\bigskipamount
 \hskip30mm
\begin{tikzcd}
\!\!\!\llap{$C_1$}\specrelabove\longrightarrow\pi C_1/\Ker\partialone
\arrow[r, "\bar\partial", "\simeq"'] \arrow[d, "\sigma"', "\simeq", rightarrow]
& \Ima\partialone \rlap{$\ \sub\> C_0 $}
 \arrow[d, "\simeq"', "\tau\ \sub\ \pi\circ\gammanought", rightarrow] 
  \\
\llap{$C^1\> \supseteq\ \Ima\deltanought\> \supe\ \ $}\D
& \U \rlap{$\ \ \sub\  C^0/\Ker\deltanought\ \specrelbelow\longleftarrow\pi\ C^0$}
   \arrow[l, "\simeq", "\bar\delta"']
\end{tikzcd}
\vskip-\medskipamount
 \caption{Cycles, cocycles, and their quotients in hypergraphs}
\label{cycle-cocyle diagram for sets}
\end{figure}%
  \COMMENT{}

\begin{PROP}\label{D}
	The map $\bar\delta\circ\pi\circ\gammanought\circ\bar\partial$ is an isomorphism $C_1/\Ker\partialone\to\D$.%
   \COMMENT{}
\end{PROP}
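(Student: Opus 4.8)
The plan is to chase the composite map on a general coset, read off surjectivity onto $\D$ directly from the definition of $\D$, and then reduce injectivity to the nondegeneracy of $\langle\ ,\ \rangle_\partial$ already established. First I would note that each of the four factors $\bar\partial$, $\gamma_0$, $\pi$, $\bar\delta$ is a homomorphism (and $\bar\partial$ is already defined on the quotient), so the composite is a well-defined homomorphism $C_1/\Ker\partial_1\to C^1$. Chasing $[x]\in C_1/\Ker\partial_1$ through the diagram gives $\bar\partial([x])=\partial_1 x$, then $\gamma_0(\partial_1 x)=\varphi_{\partial x}$, then $\pi(\varphi_{\partial x})=[\varphi_{\partial x}]$, and finally $\bar\delta([\varphi_{\partial x}])=\delta^0(\varphi_{\partial x})=\psi_{\partial x}$. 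Since every element of $\Ima\partial_1$ is $\partial_1 x$ for some $x\in C_1$, the image of the composite is exactly $\{\psi_{\partial x}\mid x\in C_1\}=\D$, so the map is onto $\D$.

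It then remains to show the kernel is trivial. Three of the four factors, namely $\bar\partial$, $\gamma_0$ and $\bar\delta$, are injective, so the composite can only collapse at $\pi$; concretely, $[x]$ lies in the kernel precisely when $\varphi_{\partial x}\in\Ker\delta^0$. The key observation is that $\varphi_{\partial x}\in\Ker\delta^0$ means $\varphi_{\partial x}(\partial_1 y)=0$ for every $y\in C_1$, and by the computation recorded in Section~\ref{sec:innerproduct} (the proof of Lemma~\ref{symmetry}) this value is exactly $\langle x,y\rangle_\partial$. Hence $[x]$ is in the kernel if and only if $\langle x,y\rangle_\partial=0$ for all $y\in C_1$.

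To finish I would specialise to $y=x$. By Lemma~\ref{symmetry}, $\langle x,x\rangle_\partial=\sum_v\alpha_v^2$ where $\partial_1 x=\sum_v\alpha_v v$, and vanishing of this sum forces every $\alpha_v=0$, i.e.\ $\partial_1 x=0$, i.e.\ $x\in\Ker\partial_1$ and $[x]=0$. Equivalently, one may invoke Lemma~\ref{lem:innerproduct} directly: since $\langle\ ,\ \rangle_\partial$ is positive definite on $C_1/\Ker\partial_1$, a coset orthogonal to everything (in particular to itself) must be zero. This yields injectivity and completes the proof.

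I do not expect a genuine obstacle here: once the diagram chase identifies the composite with $[x]\mapsto\psi_{\partial x}$, surjectivity onto $\D$ is definitional and the whole of injectivity collapses onto the nondegeneracy supplied by Lemma~\ref{lem:innerproduct}. The only point requiring mild care is the translation step, recognising that the single non-injective factor $\pi$ contributes a kernel governed by $\Ker\delta^0$, and rewriting the condition $\varphi_{\partial x}\in\Ker\delta^0$ as the orthogonality statement $\langle x,\cdot\rangle_\partial\equiv 0$.
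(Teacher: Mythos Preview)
Your proof is correct and follows essentially the same route as the paper: identify the composite with $[x]\mapsto\psi_{\partial x}$, note that surjectivity onto~$\D$ is definitional, and reduce injectivity to the vanishing of $\langle x,y\rangle_\partial$ for all~$y$, which forces $x\in\Ker\partial_1$ by Lemma~\ref{lem:innerproduct}. Your version is slightly more explicit about the diagram chase and about isolating~$\pi$ as the only non-injective factor, but the argument is the same.
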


\proof
  We have to show that $\bar\delta\circ\pi\circ\gammanought\circ\bar\partial$ is injective, i.e., that any $x\in C_1$ which $\deltanought\circ\gammanought\circ\partialone$%
   \COMMENT{}
   maps to~$0\in C^1$ lies in~$\Ker\partialone$ and hence represents zero in~$C_1/\Ker\partialone$. But these are precisely the $x\in C_1$ for which $\psi_{\partial x} = 0\in C^1$, i.e., which are such that $\langle x,y \rangle_\partial = 0$ for all~$y\in C_1$. By Lemma~\ref{lem:innerproduct}, these~$x$ lie in~$\Ker\partialone$.
\endproof

Over the integers, as here, the inclusion $\D\sub\Ima\deltanought$ is usually proper.%
  \COMMENT{}
   In a connected%
   \COMMENT{}
   graph, for example, the 0-cochains~$\varphi_v$ are not only themselves not in~$\gammanought(\Ima\partial_1)$, they do not even represent elements of~$(\pi\circ\gammanought)(\Ima\partial_1)$ in~$C^0/\Ker\delta$.%
  \COMMENT{}
   (They do over the reals.)%
   \COMMENT{}
   Hence the $\bar\deltanought$-images of the classes~$[\varphi_v]$ lie in~$\Ima\deltanought\sm\D$.%
   \COMMENT{}
   On the other hand, we shall see in Lemma~\ref{lem:H1} that $C_1/\Ker\partialone$~is isomorphic not only to~$\D$ (as by Lemma~\ref{D}) but also to the annihilator of~$\Ker\partial_1$ in~$C^1$. This trivially contains $\Ima\delta^0$, and this containment too can be proper. In this case, then, $C_1/\Ker\partialone$ will be isomorphic to two submodules of~$C^1$, one properly contained in~$\Ima\delta^0$ and the other properly containing it.%
   \COMMENT{}
   More on this after Lemma~\ref{lem:H1}.

\medbreak

Let us reflect a little on the difference between the graph and the hypergraph case in this context. In both cases we ended up with an isomorphism~$\sigma$ between $C_1/\Ker\partialone$ and~$\Ima\delta^0$ (or a submodule~$\D$ of~it), and a corresponding isomorphism~$\tau$ between $\Ima\partial_1$ and $C^0/\Ker\deltanought$ (or a submodule~$\U$ of~it).

In the graph case we started out with~$\sigma$, because we had some particularly natural choices for it (one for each spanning tree), and defined $\tau$ simply by completing the diagram, as $\tau = \bar\delta^{-1}\circ\sigma\circ\bar\partial^{-1} \colon\Ima\partial_1\to C^0/\Ker\deltanought$.

In the more general case of hypergraphs there is no natural isomorphism~$\sigma$ between~$C_1/\Ker\partialone$ and $\Ima\delta^0$; indeed, as we shall see in Lemma~\ref{lem:H1} and Example~\ref{MainExample}, there may be none at all.%
   \COMMENT{}
   We therefore started at the other end, with the canonical isomorphism~$\gammanought\colon C_0\to C^0$, and defined $\tau$ explicitly as the restriction of $\pi\circ\gammanought$ to its intended domain of~$\Ima\partialone\sub C_0$. This~$\tau\colon \Ima\partial_1\to C^0/\Ker\deltanought$ is not in general surjective, even for graphs.%
   \COMMENT{}
   So we had to replace $C^0/\Ker\deltanought$ in our diagram with its submodule $\U:= \tau(\Ima\partial_1)$, and correspondingly $\Ima\deltanought\sub C^1$ with its submodule $\D := \bar\deltanought(\U)$. But $\tau$ turned out to be injective. We could therefore define $\sigma$ by completing the diagram, as $\sigma = \bar\delta\circ\tau\circ\bar\partial\colon C_1/\Ker\partialone\to\D\sub\Ima\delta^0$.

Although we have seen that $\U$ and~$\D$ exist as stated, one can still ask for which hypergraphs they have natural interpretations. Conversely, one might try to mimick the situation for graphs and ask for a particularly natural isomorphism~$\sigma$ between $C_1/\Ker\partialone$ and $\Ima\delta^0$ or a natural subspace of it, and then derive $\tau$ from~$\sigma$ rather than the other way round. Or we could seek to find both $\sigma$ and~$\tau$ together:

\begin{PROB}\label{sigmatau}
For which hypergraphs is there a natural pair of isomorphisms $\sigma$ and~$\tau$ that make the diagram of Figure~\ref{cycle-cocyle diagram for graphs} or~\ref{cycle-cocyle diagram for sets} commute?
\end{PROB}

Since graphs are special cases of hypergraphs, Problem~\ref{sigmatau} has a positive answer in some cases. It would be interesting to see to which hypergraphs the particular solution for graphs extends, but also to find new pairs of $\sigma$ and~$\tau$ for other natural hypergraphs.

In Section~\ref{sec:SpTrees} we shall see that, with real coefficients, the standard definition of~$\sigma$ that we know from graphs (see earlier) does extend: one can define `algebraic spanning trees' in hypergraphs that default to spanning trees in graphs and make our earlier approach work in general. So Problem~\ref{sigmatau} has a general positive solution for real coefficients.

\section{Orthogonal decomposition over the reals}\label{sec:OrthogonalReals}

In this section and the next we briefly return to real coefficients. With these everything is much simpler, since our chain groups become vector spaces and we can use linear algebra.%
   \COMMENT{}
   Indeed, all we do in this section can be done in a general linear algebra setting. We shall note this more formally in Section~\ref{sec:LA}, to shed a perhaps unexpected light on orthogonality in real vector spaces as seen through a graph-theoretic lens. For now, however, let us stick to hypergraphs.

Let $\langle\ ,\ \rangle$ denote the standard inner product in~$C_1$ with respect to its basis~$E$,%
   \COMMENT{}
   which maps $e,e'\in E$ to $\langle e,e' \rangle := \delta_{ee'}$ and extends bilinearly to~$C_1\times C_1$. As our isomorphism $\gammaone\colon C_1\to C^1$ was also defined in terms of~$E$ by setting $\gammaone(e)(e') := \delta_{ee'}$, we thus have
 $$\gammaone(x)(y) = \langle x,y \rangle = \langle y,x\rangle = \gammaone(y)(x)$$
 for all $x,y\in C_1$. On~$C^1$ we consider the analogous standard inner product with respect to its basis~$\gamma(E)$.%
   \COMMENT{}
   We can thus speak of orthogonality between subspaces of~$C_1$ or of~$C^1$ now.

In the case of graphs, we have the following well-known relationship between their cycle and bond spaces in $\E = C_1$:
 $$\C = \B^\bot\qquad{\rm and}\qquad \B = \C^\bot.$$%
   \COMMENT{}
 Over real coefficients, as we are considering now, this implies that $\C$~and~$\B$ are (orthogonal) complements in the edge space, i.e.\ that $\E = \C\oplus\B$.%
   \footnote{This need not hold over finite fields.}%
   \COMMENT{}
   This translates into homological language, and thereby to our more general setting, as follows.

The orthogonal complement~$X^\bot$ of a subspace $X\sub C_1$ corresponds via~$\gammaone$ to the annihilator of~$X$ in~$C^1$, the subspace
 $$\gammaone(X^\bot) = \{\,\psi\in C^1\mid \psi(X) = 0\,\}$$
 of~$C^1$.%
   \COMMENT{}
   For $X = \Ker\partial_1$, its annihilator $\gammaone((\Ker\partial_1)^\bot)$ trivially contains~$\Ima\delta^0$. In fact, it is equal to it~-- which is the the essence of the orthogonal decomposition theorem for `cycles and cuts' in an abstract context:

\begin{LEM}\label{LAexactsequences}
   With real coefficients, all hypergraphs satisfy the following:
   \vskip-\medskipamount\vskip0pt
   \begin{enumerate}[\rm (i)]\itemsep=0pt
   \item $\Ker\partial_1$ and $\gammaone^{-1}(\Ima\delta^0)$ are orthogonal complements in~$C_1;$\label{orthocompchains}%
   \COMMENT{}
   \item $\gammaone(\Ker\partial_1)$ and $\Ima\delta^0$ are orthogonal complements in~$C^1$.\label{orthocompcochains}
   \end{enumerate}
\end{LEM}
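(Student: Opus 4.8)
The plan is to recognise that, over the reals, part~(\ref{orthocompchains}) is just the classical fact that the kernel of a linear map and the image of its adjoint are orthogonal complements, once we observe that $\delta^0$ is the adjoint of~$\partial_1$ transported through the canonical isomorphisms $\gammanought$ and~$\gammaone$. Part~(\ref{orthocompcochains}) will then come for free, because $\gammaone\colon C_1\to C^1$ is an \emph{isometry}: by construction it carries the orthonormal basis~$E$ of~$C_1$ to the orthonormal basis~$\gammaone(E)$ of~$C^1$, so it preserves inner products and hence orthogonal complements. Applying $\gammaone$ to the pair in~(\ref{orthocompchains}) turns it into $\gammaone(\Ker\partial_1)$ and $\gammaone\big(\gammaone^{-1}(\Ima\delta^0)\big)=\Ima\delta^0$, which is exactly~(\ref{orthocompcochains}). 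So it suffices to prove~(\ref{orthocompchains}).

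For~(\ref{orthocompchains}) I would proceed in two elementary steps. First, orthogonality: let $z\in\Ker\partial_1$ and let $w=\gammaone^{-1}(\delta^0\varphi)$ be an arbitrary element of $\gammaone^{-1}(\Ima\delta^0)$, where $\varphi\in C^0$. Using the symmetry of the form, the identity $\gammaone(w)(z)=\langle w,z\rangle$, and $\delta^0\varphi=\varphi\circ\partial_1$, we get
\[
  \langle z,w\rangle=\langle w,z\rangle=\gammaone(w)(z)=(\delta^0\varphi)(z)=\varphi(\partial_1 z)=\varphi(0)=0,
\]
so the two subspaces are orthogonal to one another. Second, a dimension count: by rank--nullity $\dim\Ker\partial_1=\dim C_1-\rk\partial_1$, while $\dim\gammaone^{-1}(\Ima\delta^0)=\dim\Ima\delta^0=\rk\delta^0$. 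Since $\delta^0$ is described by the transpose of the matrix~$B$ describing~$\partial_1$, row rank equals column rank gives $\rk\delta^0=\rk\partial_1$, so the two dimensions add up to $\dim C_1$. Two mutually orthogonal subspaces whose dimensions sum to that of the ambient space are orthogonal complements, which is~(\ref{orthocompchains}).

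There is no real obstacle here; the only point requiring care is the bookkeeping with the canonical isomorphisms $\gammanought,\gammaone$ and the convention $\gammaone(x)(y)=\langle x,y\rangle$, which is precisely what makes the orthogonality computation collapse to $\varphi(\partial_1 z)=0$. If one prefers to bypass the dimension count altogether, the same conclusion follows by identifying $\gammaone^{-1}\circ\delta^0\circ\gammanought$ with the adjoint~$\partial_1^{*}$ of~$\partial_1$: a one-line check on the basis vectors $v\in V\!$ shows that both send~$v$ to the $1$-chain in which each edge~$e$ has coefficient $1$ if $e$ points towards~$v$, $-1$ if $e$ points away from~$v$, and $0$ otherwise. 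One may then invoke $(\Ker\partial_1)^{\bot}=\Ima\partial_1^{*}$ directly. The dimension argument above is simply the bare-hands version of this identity, and is the route most in keeping with the elementary spirit of the paper.
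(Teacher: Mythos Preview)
Your proof is correct. The derivation of~(\ref{orthocompcochains}) from~(\ref{orthocompchains}) via the isometry~$\gammaone$ is exactly what the paper does. For~(\ref{orthocompchains}) itself, however, you take a slightly different route: the paper invokes the fact that the dual of a short exact sequence of vector spaces is exact, which yields $\Ima\delta^0=\Ker i^*$ (the annihilator of~$\Ker\partial_1$) in one stroke, whereas you verify the inclusion $\gammaone^{-1}(\Ima\delta^0)\subseteq(\Ker\partial_1)^\bot$ by a direct computation and then upgrade it to equality via a rank--nullity dimension count using $\rk\delta^0=\rk B^\top=\rk B=\rk\partial_1$. Your approach is a shade more elementary and self-contained; the paper's is a shade more conceptual. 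Your aside about identifying $\gammaone^{-1}\circ\delta^0\circ\gammanought$ with the adjoint~$\partial_1^*$ and quoting $(\Ker\partial_1)^\bot=\Ima\partial_1^*$ is in fact the same statement as the paper's exactness argument, just phrased in inner-product rather than categorical language.
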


\begin{figure}[hbt]
\center
\vskip-\medskipamount
\begin{tikzcd}
0
\arrow[r] 
& \Ker\partial_1
\arrow[r, " i"] \arrow[d, "\simeq"]%
   \COMMENT{}
& C_1
\arrow[r, "\partial_1"] \arrow[d, "\gamma_1"', "\simeq"]
& C_0
\arrow[d, "\simeq"', "\gamma_0"]
\\
0 
& (\Ker\partial_1)^*\arrow[l]
%\arrow[u]
& C^1 \arrow[l, "i^*"]
& C^0 \arrow[l, "\delta^0"]
\end{tikzcd}
\vskip-\medskipamount
\caption{Dual exact sequences of vector spaces}%
   \COMMENT{}
\label{LAexact}
\vskip-\medskipamount
\end{figure}

\proof
 The diagram	 in Figure~\ref{LAexact} shows an exact sequence together with its dual, where $(\Ker\partial_1)^* = {\rm Hom}(\Ker\partial_1,\R)$ and $i$ is the inclusion embedding of $\Ker\partial_1$ in~$C_1$. The kernel of its dual map~$i^*$, by definition, contains exactly those $\psi\in C^1$ that send $\Ker\partial_1$ to~0: it is the annihilator%
  \COMMENT{}
  of~$\Ker\partial_1$ in~$C^1$. As duals of exact sequences of vector spaces are exact,%
   \COMMENT{}
   we thus have $\Ima\delta^0 = \Ker i^* = \gammaone((\Ker\partial_1)^\bot)$.%
   \COMMENT{}
   Applying the isomorphism~$\gammaone^{-1}$ now proves~\eqref{orthocompchains}.%
   \COMMENT{}

Statement~\eqref{orthocompcochains} follows from~\eqref{orthocompchains} by applying the isomorphism~$\gammaone\colon C_1\to C^1$, because orthogonality in \eqref{orthocompchains} and~\eqref{orthocompcochains} is computed with respect to the bases $E$ of~$C_1$ and $\gamma(E)$ of~$C^1$.%
   \COMMENT{}
\endproof

For all its brevity, the above proof of Lemma~\ref{LAexactsequences} does not give as much insight as we have it for graphs, where the fundamental cycles and cuts with respect to any fixed spanning tree form explicit bases of $\C$ and~$\B$. These give us an even shorter proof of the orthogonal decomposition lemma. Indeed, as fundamental cycles and cuts are clearly orthogonal to each other, the spaces $\C$ and~$\B$ they generate%
   \COMMENT{}
   are contained in each others orthogonal complement in~$\E$.%
   \COMMENT{}
   But as the cuts are indexed by the tree edges while the cycles are indexed by the tree's chords, the dimensions of $\B$ and~$\C$ add up to that of~$\E$. So this containment cannot be proper: we must have $\C=\B^\bot$ and $\B=\C^\bot$, and in particular $\E=\C\oplus\B$.%
   \COMMENT{}

Prima facie, this works only for graphs (and not over the integers.)%
   \COMMENT{}
   However we can axiomatise fundamental cycles and cuts in a way that enables us to copy the above short proof also for arbitrary hypergraphs~-- indeed for arbitrary finite-dimensional real vector spaces; see Section~\ref{sec:LA}~-- whenever these axioms are met.%
   \COMMENT{}
   Let us do this next.%
   \COMMENT{}

\section{Algebraic spanning trees of hypergraphs}\label{sec:SpTrees}

Let us return to our general hypergraph setup, with real coefficients. To help our intuition along, let us write $\C:=\Ker\partial_1$ and $\B:=\gammaone^{-1}(\Ima\delta^0)$, as well as~$\E:= C_1$. Let us call a subset~$T$ of~$E$ an (algebraic) {\em spanning tree\/} of our hypergraph,
 with {\em chords\/} $e\in E\sm T$, if it satisfies the following two conditions:%
  \COMMENT{}
  \begin{itemize}
  \item $\B$ has a basis $(\,x_t\mid t\in T\,)$ such that $\langle x_t, t'\rangle = \delta_{tt'}$%
   \COMMENT{}
  for all $t'\in T$;%
   \COMMENT{}
\item $\C$ has a basis $(\,x_e\mid e\in E\sm T\,)$ such that $\langle x_e, e'\rangle = \delta_{ee'}$ for all $e'\in E\sm T$.%
   \COMMENT{}
  \end{itemize}
Let us call the~$x_t$ with $t\in T$ the {\em fundamental cuts\/}, and the $x_e$ with $e\in E\sm T$ the {\em fundamental cycles\/}, of our hypergraph with respect to~$T$.%
   \COMMENT{}
   We shall refer to the above two statements as the {\em spanning tree axioms\/}.%
   \COMMENT{}
   The spanning tree is said to be {\em over\/} the coefficients used in the homology on which $\B$ and~$\C$ are based.

Note that still $\C\sub\B^\bot$ and $\B\sub\C^\bot$, because the annihilator of~$\Ker\partial_1$ contains~$\Ima\delta^0$. Hence if $\B$ and~$\C$ have bases indexed by complementary subsets of~$E$, as demanded in the spanning tree axioms, those two inclusions must hold with equality,%
   \COMMENT{}
   and $(\,x_r\mid r\in E\,)$ will be a basis of $\E=\C\oplus\B$.%
  \COMMENT{}

Fundamental cycles and cuts in graphs are clearly an example of this. In fact, when our hypergraph is a (connected) graph, they are the only example: given bases $(\,x_t\mid t\in T\,)$ of~$\B$ and $(\,x_e\mid e\in E\sm T\,)$ of~$\C$ satisfying our two conditions, where $T$ is only assumed to be a subset of~$E$, one can show%
   \COMMENT{}
   that $T$ is the edge set of a spanning tree whose fundamental cuts and cycles are these $x_t$ and~$x_e$.

\begin{THM}\label{spanningtrees}
Every hypergraph has an algebraic spanning tree over the reals.
\end{THM}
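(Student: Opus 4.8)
The plan is to recast the two spanning tree axioms as statements about coordinate projections, then to observe that for a fixed $T$ the two axioms are equivalent, so that a single well-chosen $T$ satisfies both. Throughout write $\pi_S\colon C_1\to\R^S$ for the projection of a $1$-chain onto its coordinates in a subset $S\sub E$; recall from Section~\ref{sec:innerproduct} that for $x\in C_1$ and $r\in E$ the number $\langle x,r\rangle$ is precisely the $r$-coordinate of~$x$, so $\pi_S(x)=(\langle x,r\rangle)_{r\in S}$.

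First I would translate the axioms into this language. The requirement $\langle x_t,t'\rangle=\delta_{tt'}$ for all $t'\in T$ says exactly that $\pi_T(x_t)$ is the standard basis vector~$e_t$ of~$\R^T$. Hence the first axiom holds for~$T$ if and only if $\pi_T$ restricts to an isomorphism $\B\to\R^T$: if such a basis $(x_t\mid t\in T)$ exists, the vectors $\pi_T(x_t)=e_t$ span~$\R^T$ and number $|T|=\dim\B$, forcing $\pi_T|_\B$ to be bijective; conversely, if $\pi_T|_\B$ is an isomorphism then $x_t:=(\pi_T|_\B)^{-1}(e_t)$ supplies the required basis. Symmetrically, the second axiom holds for~$T$ if and only if $\pi_{E\sm T}$ restricts to an isomorphism $\C\to\R^{E\sm T}$.

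The heart of the argument is that these two conditions are not independent: the first forces the second. Suppose $\pi_T|_\B\colon\B\to\R^T$ is an isomorphism and let $(x_t\mid t\in T)$ be the basis with $\pi_T(x_t)=e_t$. By Lemma~\ref{LAexactsequences} we have $\C=\B^\bot$ and $\dim\C+\dim\B=|E|$, so $\dim\C=|E\sm T|$ and it suffices to show $\pi_{E\sm T}|_\C$ is injective. If $c\in\C$ satisfies $\pi_{E\sm T}(c)=0$, then $c$ is supported on~$T$, say $c=\sum_{t'\in T}c_{t'}\,t'$; pairing with $x_t\in\B$ and using $c\in\B^\bot$ gives $0=\langle c,x_t\rangle=\sum_{t'\in T}c_{t'}\,\langle t',x_t\rangle=\sum_{t'\in T}c_{t'}\delta_{tt'}=c_t$ for every~$t$, whence $c=0$. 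Thus any $T$ fulfilling the first axiom automatically fulfils the second, and it remains only to produce one $T$ with $\pi_T|_\B$ an isomorphism.

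Finally I would exhibit such a~$T$ by a routine rank argument. Put $d:=\dim\B$, pick any basis of~$\B$, and assemble it as the rows of a $d\times|E|$ matrix~$M$ of rank~$d$; since $M$ has $d$ linearly independent columns, let $T$ be their index set. Then the $d\times d$ submatrix of~$M$ on the columns~$T$ is invertible, which is exactly the statement that $\pi_T|_\B\colon\B\to\R^T$ is an isomorphism. By the previous paragraph this~$T$ is an algebraic spanning tree, with fundamental cuts $(\pi_T|_\B)^{-1}(e_t)$ and fundamental cycles $(\pi_{E\sm T}|_\C)^{-1}(e_e)$. I expect the only genuine content to be the equivalence in the third step: a priori one fears the two axioms impose conflicting demands on~$T$, and the orthogonality $\C=\B^\bot$ is what makes them compatible. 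The reformulation is mere bookkeeping, and the existence of~$T$ is the standard fact that a rank-$d$ matrix has $d$ independent columns.
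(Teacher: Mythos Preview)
Your argument is correct, and it takes a genuinely different route from the paper's. The paper chooses $T$ on the boundary side, as a maximal subset of~$E$ with $\partial T$ linearly independent in~$C_0$, and then constructs the fundamental cycles and cuts by explicit formula: $x_e := e - x'_e$ where $x'_e$ is the unique $T$-supported chain with $\partial x'_e = \partial e$, and $\gamma(x_t)$ is given on~$E$ by $t'\mapsto\delta_{tt'}$ for $t'\in T$ and $e\mapsto -\gamma(x_e)(t)$ for $e\in E\sm T$. It then verifies the two axioms separately, invoking Lemma~\ref{LAexactsequences} only at the end to identify~$\B$ with the annihilator of~$\C$ and to count dimensions. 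Your approach instead works on the coboundary side: you select $T$ as an index set of independent columns of a matrix for~$\B$, and your key observation---that the first axiom forces the second via $\C=\B^\bot$---lets you dispense with any explicit formula for the~$x_e$. What the paper's route buys is concreteness: its formulas are later referenced (e.g.\ in Corollary~\ref{SpTreeCor} and the remark after it that every spanning tree must have $\partial T$ a basis of~$\Ima\partial_1$), and they make the analogy with graph-theoretic fundamental cycles transparent. What your route buys is a cleaner structural insight---that for fixed~$T$ the two axioms are equivalent, not merely compatible---and a shorter proof that leans more heavily on Lemma~\ref{LAexactsequences} from the outset.
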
%
   \COMMENT{}

\proof
Let $T\sub E$ be any maximal subset of~$E$ such that $\partialone T = (\,\partialone t\mid t\in T\,)$%
   \COMMENT{}
   is linearly independent in~$C_0$. Then for every $e\in E\sm T$ there is a unique%
   \COMMENT{}
   $x'_e\in C_1$ in the span of~$T$ such that $\partialone e = \partialone x'_e$;%
   \COMMENT{}
   let
 $$x_e := e - x'_e.$$
 Then $\partialone x_e = 0$, so $x_e\in\C = \Ker\partial_1$.

The family $(\,x_e\mid e\in E\sm T\,)$ is linearly independent.%
   \COMMENT{}
   Indeed, as $x_e = e - x'_e$ with $x'_e$ spanned by~$T$, the only element of $E\sm T$ in the representation of~$x_e$ over~$E$ whose coefficient is non-zero is~$e$. We can therefore have $\sum_{e\in E\sm T} \lambda_e x_e = 0$ only with $\lambda_e = 0$ for all~$e\in E\sm T$.%
   \COMMENT{}

The family $(\,x_e\mid e\in E\sm T\,)$ also generates~$\C$. Indeed, as it is linearly independent we just have to show that $\dim\C = \dim\Ker\partial_1 = |E\sm T|$. As $\dim\E = |E|$ and $T\sub E$, this amounts to showing that $\dim\Ima\partial_1 = |T|$.%
   \COMMENT{}
   Since $E$ generates~$\E$, clearly $\partial E$ generates~$\Ima\partial_1$. By the maximality of~$T$ this implies that~$\partial T$, too, generates~$\Ima\partial_1$.%
   \COMMENT{}
   As $\partial T$ is also linearly independent (by definition of~$T$) it is therefore a basis of~$\Ima\partial_1$, so $\dim\Ima\partial_1 = |T|$ as desired.

To complete our proof of the second spanning tree axiom it remains to show that $\langle x_e,e'\rangle = \delta_{ee'}$ for all $e,e'\in E\sm T$. By definition of~$x_e$ we have $x_e = e + \sum_{t\in T}\lambda_t t$ for some suitable coefficients~$\lambda_t$. Then
 $$\textstyle \langle x_e, e'\rangle = \langle e,e'\rangle + \sum_{t\in T} \lambda_t \langle t,e'\rangle = 
  \delta_{ee'} + \sum_{t\in T} \lambda_t \delta_{te'} = \delta_{ee'}\,,$$
 since $\langle e,e'\rangle = \delta_{ee'}$ and $\langle t,e'\rangle = \delta_{te'} = 0$ as $e'\notin T\owns t$. This completes our proof of the second spanning tree axiom for $(\,x_e\mid e\in E\sm T\,)$.

Let us now prove the first spanning tree axiom for the family $(\,x_t\mid t\in T\,)$ whose~$x_t$ are given via their duals~$\gamma(x_t)\in C^1$ as follows:%
   \COMMENT{}
   $$\gamma(x_t)\>\colon  \begin{cases}\ t'\mapsto\delta_{tt'} & \text{for all}\ t'\in T\\
   \ e\mapsto -\gamma(x_e)(t) & \text{for all}\ e\in E\sm T.
   \end{cases}$$
 Note that $\langle x_t,t'\rangle = \delta_{tt'}$ for all $t,t'\in T$ by definition,%
   \COMMENT{}
   so all we need to show is that $(\,x_t\mid t\in T\,)$ is a basis of~$\B$. The bulk of this is to show that the~$x_t$ lie in~$\B = \gammaone^{-1}(\Ima\delta^0)$, so let us do this first.

As $\Ima\delta^0$ equals,%
   \COMMENT{}
   by Lemma~\ref{LAexactsequences}, the annihilator of~$\C$, it suffices to show that every $\gammaone(x_t)$ lies in that annihilator, i.e., sends~$\C$ to zero. By the first part of our proof it suffices to show this for our basis $(\,x_e\mid e\in E\sm T\,)$ of~$\C$. We thus have to show that $\langle x_t,x_e\rangle = 0$ for all $t\in T$ and~$e\in E\sm T$.%
   \COMMENT{}
   As earlier, we have $x_e = e + \sum_{t'\in T}\lambda_{t'} t'$ for suitable coefficients~$\lambda_t'$.%
   \COMMENT{}
Then
 	\begin{eqnarray*}
	\langle x_t,x_e\rangle &=& \langle x_t, e\rangle + \textstyle\sum_{t'\in T} \lambda_{t'} \langle x_t, t'\rangle \\
 & =& \gamma(x_t)(e) + \lambda_t  \\
 & =& -\gamma(x_e)(t) + \lambda_t  \\
 & =& -\big(\gamma(e)(t) + \textstyle\sum_{t'\in T} \lambda_{t'} \gamma(t')(t) \big) + \lambda_t\\
 & =& 0%
   \COMMENT{}
 \end{eqnarray*}\vskip-.5\baselineskip\noindent
 as desired.

The proof that $(\,x_t\mid t\in T\,)$ is a basis of~$\B$ is similar to our earlier proof for~$\C$. For linear independence we once more use the fact that the only $t'\in T$ whose coefficient in the representation of~$x_t$ over~$E$ is non-zero is $t$ itself, this time by the explicit definition of~$\gammaone(x_t)$.%
   \COMMENT{}
   As
 $$\dim\B = \dim\E-\dim\C = |E| - |E\sm T| = |T|$$
 by Lemma~\ref{LAexactsequences} and the first half of our proof, this shows that $(\,x_t\mid t\in T\,)$ is in fact a basis of~$\B$.
 \endproof

Our proof of Theorem~\ref{spanningtrees}%
   \COMMENT{}
   gives a positive answer to Problem~\ref{sigmatau} for real coefficients. In fact,%
   \COMMENT{}
   it provides a natural isomorphism $\sigma\colon \E/\C\to \gammaone(\B)$, which makes our earlier diagram of Figure~\ref{cycle-cocyle diagram for graphs} commute when $\tau\colon\Ima\partial_1\to C^0/\Ker\delta$ is taken to be the induced isomorphism $\bar\delta^{-1}\circ\sigma\circ\bar\partial^{-1}$:%
   \COMMENT{}

\begin{COR}\label{SpTreeCor}
Every hypergraph has an algebraic spanning tree%
   \COMMENT{}
   over the \hbox{reals} such that mapping the classes $t+\C$ to its fundamental cuts~$x_t$ defines an isomorphism $\E/\C\to\B$.
\end{COR}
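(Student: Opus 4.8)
The plan is to read this off directly from Theorem~\ref{spanningtrees}. That theorem supplies an algebraic spanning tree $T\sub E$ together with a basis $(\,x_t\mid t\in T\,)$ of~$\B$ and a basis $(\,x_e\mid e\in E\sm T\,)$ of~$\C$ satisfying the two spanning tree axioms. I would fix such a~$T$ at the outset; all that then remains is to verify that the prescription $\sigma\colon t+\C\mapsto x_t$ genuinely defines an isomorphism $\E/\C\to\B$.

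The first point to settle is that the classes $(\,t+\C\mid t\in T\,)$ form a basis of~$\E/\C$, since only then does prescribing~$\sigma$ on these classes determine a linear map at all. For linear independence I would suppose $\sum_{t\in T}\mu_t\,t\in\C$ for reals~$\mu_t$ and test this element against the fundamental cuts. Fix $s\in T$. Because $\B\sub\C^\bot$ (noted just after the spanning tree axioms), $x_s$ annihilates~$\C$, so $\langle x_s,\sum_t\mu_t\,t\rangle=0$; on the other hand the first axiom gives $\langle x_s,\sum_t\mu_t\,t\rangle=\sum_t\mu_t\delta_{st}=\mu_s$. Hence $\mu_s=0$ for every~$s\in T$, and the classes are independent. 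A dimension count then shows that these $|T|$ independent classes also span: as $(\,x_e\mid e\in E\sm T\,)$ is a basis of~$\C$ we have $\dim\C=|E\sm T|$, whence $\dim(\E/\C)=|E|-|E\sm T|=|T|$.

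With this basis in place, $\sigma$ is the unique linear map sending each $t+\C$ to~$x_t$. Since $(\,x_t\mid t\in T\,)$ is, by the first spanning tree axiom, a basis of~$\B$, the map $\sigma$ carries a basis of~$\E/\C$ bijectively onto a basis of~$\B$ and is therefore an isomorphism, as claimed.

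I do not expect a serious obstacle here: the substance has already been absorbed into Theorem~\ref{spanningtrees}, and the only genuine thing to watch is the well-definedness of~$\sigma$, that is, that the classes $t+\C$ form a basis rather than merely a spanning set. The clean way to secure this is the orthogonality computation above, which uses only $\B\sub\C^\bot$ together with the biorthogonality $\langle x_t,t'\rangle=\delta_{tt'}$, and so applies to any algebraic spanning tree, not just the particular one built in the proof of the theorem.
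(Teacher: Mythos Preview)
Your argument is correct. The paper takes a slightly different route: it does not work with an arbitrary algebraic spanning tree but with the specific one constructed in the proof of Theorem~\ref{spanningtrees}, for which $\partial T$ is a basis of~$\Ima\partial_1$. Since $\bar\partial\colon\E/\C\to\Ima\partial_1$ is an isomorphism, this immediately yields that the classes $[t]$ with $t\in T$ form a basis of~$\E/\C$, with no further computation needed; the map $[t]\mapsto x_t$ is then a bijection between bases.

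Your approach instead derives the fact that $(\,t+\C\mid t\in T\,)$ is a basis of~$\E/\C$ purely from the spanning tree axioms and the inclusion $\B\sub\C^\bot$, via the orthogonality test against the~$x_s$. This is a little longer but has the advantage, which you note yourself, of applying to \emph{every} algebraic spanning tree rather than only the one produced by the particular construction. The paper recovers this generality only afterwards, by appealing to matroid representation theory to show that every algebraic spanning tree must in fact satisfy the property that $\partial T$ is a basis of~$\Ima\partial_1$; your argument bypasses that appeal entirely.
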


\begin{proof}
Let $T$ be the spanning tree defined in the proof of Theorem~\ref{spanningtrees}, i.e.,\ such that $\partial T$ is a basis of~$\Ima\partial_1$. Then the classes $[t]$ with $t\in T$ form a basis of~$\E/\C$.%
   \COMMENT{}
   By the first spanning tree axiom, mapping $[t]$ to~$x_t$ defines a bijection between bases of $\E/\C$ and of~$\B$.%
   \COMMENT{}
\end{proof}

Note that the mere fact that $\E/\C$ and~$\B$ are isomorphic already follows from the standard orthogonal decomposition theorem, Lemma~\ref{LAexactsequences}. What Theorem~\ref{spanningtrees} adds is a particularly natural isomorphism witnessing this, as we know it from graphs.

Our proof of Theorem~\ref{spanningtrees} was based on explicit definitions of a spanning tree, its fundamental cycles, and its fundamental cuts. Using matroid representation theory one can show that this was not just one choice amongst others, but that all these have to arise in this way: that $\partial T$ must be a basis of~$\Ima\partial_1$, and that the $x_e$ and~$x_t$ are then exactly as we defined them (relative to~$T$)~\cite{BowlerPC20, OxleyBook}.%
   \COMMENT{}
   In particular, Corollary~\ref{SpTreeCor} holds for every algebraic spanning tree over the reals, not just for a particular one.

\section{Spanning trees of vector spaces}\label{sec:LA}

The purpose of this section is to record briefly what we proved in Theorem~\ref{spanningtrees} without saying so: that orthogonal decomposition in arbitrary finite-dimensional real vectors spaces can be described in graph-theoretical terms, with reference to the `fundamental cycles and cuts' of a `spanning tree'.%
   \COMMENT{}

Instead of our boundary homomorphism $\partialone\colon C_1\to C_0$ let us consider any homomorphism $f\colon V\!\to W$ between finite-dimensional real vector spaces, replacing the coboundary homomorphism~$\delta^0$ with its dual $f^*\colon W^*\to V^*$ accordingly.

Let $S$ be any basis of~$V\!$. We then have the canonical isomorphism $V\!\to V^*$ which sends each $s\in S$ to the unique linear form in~$V^*$ that maps all the $s'\in S$ to~$\delta_{ss'}$; this takes the place of our earlier~$\gamma_1$. And when we speak of orthogonality in~$V\!$, the inner product referred to is that for which $\langle s,s'\rangle = \delta_{ss'}$ for all~$s,s'\in S$: the canonical inner product of the coefficient tuples over~$S$.%
   \COMMENT{}

There is one more simplification. Instead of considering arbitrary homomorphisms $f\colon V\!\to W$ as above, there is no loss of generality in considering the canonical homomorphism $v\mapsto [v]$ from $V\!$ to~$V/\Ker f$. The simplification here is not so much that we replace $\Ima f\sub W$ with its isomorphic copy~$V/\Ker f$, but that the subspaces $\C = \Ker f$ and $\B = (\Ima f^*)^*$%
   \COMMENT{} 
   of~$V\!$ depend only on the kernel of~$f$ rather than on $f$ itself.%
   \footnote{Since every homomorphism $\Ima f\to\R$ extends to a homomorphism $W\!\to\R$, the subspace $\Ima f^*$ of~$V^*$ remains the same if we replace $W$ with its subspace~$\Ima f$.}%
   \COMMENT{}
   Since the spanning trees we defined in Section~\ref{sec:SpTrees} depend only on the choice of our basis~$S$ of~$V\!$ and on its subspaces $\C$ and~$\B$,%
   \COMMENT{}
   this means that in fact they only depend on $S$ and the subspace $U=\Ker f$ of~$V\!$.

For this reason let us say that, given~$V\!$, a~basis $S$ of~$V\!$, and a subspace $U\sub V\!$, a set $T\sub S$ is a {\em spanning tree\/} of~$V\!$ {\em with respect to}~$U$ (and~$S$) if it satisfies our two spanning tree axioms:

\goodbreak

  \begin{itemize}
  \item $U^\bot$%
     \COMMENT{}   
   has a basis $(\,x_t\mid t\in T\,)$ such that $\langle x_t, t'\rangle = \delta_{tt'}$ for all $t'\in T$;
\item $U$ has a basis $(\,x_s\mid s\in S\sm T\,)$ such that $\langle x_s, s'\rangle = \delta_{ss'}$ for all $s'\in S\sm T$.
  \end{itemize}

  \noindent
   The~$x_t$ are the {\em fundamental cuts\/} of~$T$, the~$x_s$ its {\em fundamental cycles\/}.

\begin{THM}
Let $V\!$ be a finite-dimensional real vector space with basis~$S$, and $U\sub V$ any subspace. Then $S$ contains a spanning tree of~$V\!$ with respect to~$U\!$.
Its fundamental cycles form a basis of~$U\!$, its fundamental cuts a basis of~$U^\bot\!$.\looseness=-1
\end{THM}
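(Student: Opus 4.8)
The plan is to replay the proof of Theorem~\ref{spanningtrees} in this abstract setting, with the boundary map $\partialone\colon C_1\to C_0$ replaced by the canonical quotient map $q\colon V\to V/U$, whose kernel is exactly~$U$. Thus $U$ takes the role of the cycle space $\C=\Ker\partialone$ and $U^\bot$ that of the bond space~$\B$; that these are orthogonal complements with $\dim U+\dim U^\bot=\dim V$ is the standard fact about real inner-product spaces, so no appeal to Lemma~\ref{LAexactsequences} is needed. First I would fix $T\sub S$ to be a maximal subset for which $(\,q(t)\mid t\in T\,)$ is linearly independent in~$V/U$. Since $S$ spans~$V$, its image $q(S)$ spans~$V/U$, so $q(T)$ is then a basis of~$V/U$ and hence $|T|=\dim(V/U)=\dim V-\dim U$. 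For $s\in S\sm T$ maximality forces $q(s)$ into the span of~$q(T)$, and as $q$ is injective on~$\mathrm{span}(T)$ there is a unique $x'_s\in\mathrm{span}(T)$ with $q(s)=q(x'_s)$; I set $x_s:=s-x'_s$, so that $x_s\in\Ker q=U$. Exactly as before, the family $(\,x_s\mid s\in S\sm T\,)$ is linearly independent because the coefficient of $s$ over the basis~$S$ is~$1$ and no other element of $S\sm T$ occurs in~$x_s$; and since $|S\sm T|=\dim U$, these vectors form a basis of~$U$. The orthogonality $\langle x_s,s'\rangle=\delta_{ss'}$ for $s,s'\in S\sm T$ is immediate from $x_s=s+\sum_{t\in T}\lambda_t t$ and $\langle t,s'\rangle=0$, which establishes the second spanning tree axiom.

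Next I would introduce the fundamental cuts by prescribing their images under the canonical isomorphism $\gamma\colon V\to V^*$, copying the definition from Theorem~\ref{spanningtrees}:
$$\gamma(x_t)\colon\begin{cases}\ t'\mapsto\delta_{tt'} & \text{for all }t'\in T\\ \ s\mapsto-\gamma(x_s)(t) & \text{for all }s\in S\sm T.\end{cases}$$
This gives $\langle x_t,t'\rangle=\delta_{tt'}$ for $t,t'\in T$ by construction. To see $x_t\in U^\bot$ it suffices, since the $x_s$ span~$U$, to check that $\langle x_t,x_s\rangle=0$ for all $t\in T$ and $s\in S\sm T$; writing $x_s=s+\sum_{t'\in T}\lambda_{t'} t'$ and substituting the defining values of $\gamma(x_t)$ yields~$0$ by the same short computation as in Theorem~\ref{spanningtrees}. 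Finally the $x_t$ are linearly independent (only $t$ among elements of~$T$ has nonzero coefficient in~$x_t$), and there are $|T|=\dim U^\bot$ of them, so they form a basis of~$U^\bot$. This establishes the first axiom, and with the $x_s$ as fundamental cycles spanning~$U$ and the $x_t$ as fundamental cuts spanning~$U^\bot$ the theorem follows.

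I expect the only genuinely delicate point to be this backward definition of the cuts via $\gamma(x_t)$, together with the verification that the resulting vectors lie in~$U^\bot$: the correction term $-\gamma(x_s)(t)$ on~$S\sm T$ is tuned precisely so that the cross inner products $\langle x_t,x_s\rangle$ cancel against the diagonal contribution. Everything else~-- the choice of~$T$, the construction of the cycles, and the dimension counts~-- is routine linear algebra, amounting to a verbatim transcription of the corresponding steps of Theorem~\ref{spanningtrees} with $\partialone$ read as the quotient map~$q$.
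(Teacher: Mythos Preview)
Your proposal is correct and follows essentially the same approach as the paper: the paper's own proof simply instructs the reader to redo the proof of Theorem~\ref{spanningtrees} with the canonical projection $\pi\colon V\to V/U$ in place of~$\partialone$, noting that then $\C=U$ and $\B=U^\bot$ by standard linear algebra, and you have carried out exactly that transcription in full detail.
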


\proof
   Redo the proof of Theorem~\ref{spanningtrees} with the canonical projection ${\pi\colon V\!\to V/U}$ replacing $\partialone\colon C_1\to C_0$,%
   \COMMENT{}
   and its dual $\pi^*$ replacing~$\delta^0$. Then $\C = \Ker\pi = U$, and $\B = (\Ker\pi)^\bot = U^\bot$ by well-known linear algebra%
     \COMMENT{}   
   or Lemma~\ref{LAexactsequences}\,\eqref{orthocompchains}. The spanning tree found in that proof, therefore, is as required here.
\endproof

\section{Algebraic spanning trees over the integers}\label{sec:SpanningTreesIntegers}

Let us briefly address what happens in the context of Section~\ref{sec:SpTrees} when we take integer coefficients.%
   \COMMENT{}
   Let us write $\C_\Z := \Ker\partial_1$ and $\B_\Z := \gammaone^{-1}(\Ima\delta^0)$ for the cycles and%
   \COMMENT{}
   coboundaries when our chain groups are taken over the integers, and $\C_\R$ and~$\B_\R$ when they are taken over the reals. Note that $\partialone$ in the former case is the restriction to chains with inter coefficients of the $\partialone$ for real coefficients, since both expand linearly from the map~$\partialone\colon E\to \{-1,0,1\}$. In particular, $\C_\Z\sub\C_\R$. Similarly, we have $\B_\Z\sub\B_\R$,%
   \COMMENT{}
   since every 0-cochain over the integers extends to one over the reals.%
  \COMMENT{}

Staying entirely within the integer coefficient scenario, we can still define spanning trees as we did earlier;%
   \COMMENT{}
  we would then require $(\,x_t\mid t\in T\,)$ to be a $\Z$-module basis of~$\B_\Z$, and $(\,x_e\mid e\in E\sm T\,)$ to be one of~$\C_\Z$.

But we can also combine the two scenarios, as follows. Call a spanning tree~$T$ over the reals {\em integral\/} if the corresponding%
   \COMMENT{}
   bases $(\,x_t\mid t\in T\,)$ of~$\B_\R$ and $(\,x_e\mid e\in E\sm T\,)$ of~$\C_\R$ satisfy $x_t\in\B_\Z$ for all $t\in T$, and $x_e\in\C_\Z$ for all $e\in E\sm T$. In particular, the $x_e$ and the~$x_t$ then have integer coefficients in their representations over~$E$. In the case of the~$x_e$ this is equivalent to $x_e\in\C_\Z$, but in the case of the~$x_t$ we are asking a little more: we also require that $\gamma(x_t)$ is an image under~$\delta^0$ of a 0-cochain with integer coefficients.%
   \COMMENT{}

Integral spanning trees may not be of immediate interest in their own right, but they are a useful technical device. Via Lemma~\ref{sp.tree bases} below they can help us prove that a spanning tree over the integers exists. Conversely, a spanning tree over the integers may be useful in applications only once we know it is also an integral spanning tree over the reals;%
   \COMMENT{}
   see Proposition~\ref{1.9.5} for an example.%
   \COMMENT{}

\begin{LEM}\label{sp.tree bases}
The spanning trees of a hypergraph over the integers are precisely its integral spanning trees over the reals.
\end{LEM}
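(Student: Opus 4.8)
The plan is to prove the two inclusions separately, exploiting the fact that both notions of spanning tree are built on the \emph{same} index sets $T$ and $E\sm T$ and the \emph{same} duality conditions $\langle x_t,t'\rangle=\delta_{tt'}$ and $\langle x_e,e'\rangle=\delta_{ee'}$. These duality conditions, together with the membership requirements $x_t\in\B_\Z$ and $x_e\in\C_\Z$, therefore transfer verbatim in either direction; what genuinely needs work is only the passage between being a $\Z$-module basis of $\C_\Z$ (resp.\ $\B_\Z$) and being an $\R$-basis of $\C_\R$ (resp.\ $\B_\R$). Here one half is free in both directions: a family of integer vectors that is $\Z$-linearly independent is automatically $\R$-linearly independent, because for rational vectors independence over $\Z$, over $\mathbb Q$, and over $\R$ all coincide (the rank is detected by the vanishing of the same minors). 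So in each direction it remains only to match up \emph{spanning}.

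For the direction ``integral over the reals $\Rightarrow$ over the integers'', suppose $T$ is an integral spanning tree over the reals, so $(x_e\mid e\in E\sm T)$ is an $\R$-basis of $\C_\R$ with every $x_e\in\C_\Z$, and dually for the cuts. First I would record a support lemma: since $(x_e)$ is an $\R$-basis of $\C_\R$ and each $x_e$ has coefficient $\delta_{ee'}$ on the chord $e'$, any $v\in\C_\R$ supported only on $T$ vanishes---expand $v=\sum_e\alpha_e x_e$ and read off that each chord-coefficient $\alpha_{e'}$ of $v$ equals $0$; the mirror statement holds for $\B_\R$ and $(x_t)$. To see that $(x_e)$ generates $\C_\Z$ over $\Z$, take $z\in\C_\Z$, let $d_e\in\Z$ be its chord-coefficients, and subtract the integer combination $\sum_e d_e x_e$: the difference $z'$ lies in $\C_\Z\sub\C_\R$, is supported on $T$, and hence is $0$ by the support lemma, so $z=\sum_e d_e x_e$. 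The generation of $\B_\Z$ by $(x_t)$ is entirely symmetric, subtracting $\sum_t a_t x_t$ (with $a_t$ the tree-edge coefficients) and invoking the support lemma for $\B_\R$. With $\R$-independence giving $\Z$-independence, $(x_e)$ and $(x_t)$ are the required $\Z$-module bases.

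For the converse ``over the integers $\Rightarrow$ integral over the reals'', suppose $(x_e)$ is a $\Z$-basis of $\C_\Z$ and $(x_t)$ a $\Z$-basis of $\B_\Z$. Each family is $\R$-linearly independent and lies in the corresponding real space (using $\C_\Z\sub\C_\R$ and $\B_\Z\sub\B_\R$), so it suffices to check that the cardinalities match the real dimensions: $|E\sm T|=\rk_\Z\C_\Z=\dim_\R\C_\R$ and $|T|=\rk_\Z\B_\Z=\dim_\R\B_\R$. For the first I would use that $\C_\Z=\Ker\partialone$ is the integer kernel of the integer matrix representing $\partialone$, whose $\Z$-rank equals $\dim_\R\C_\R$ because the matrix rank is insensitive to the coefficient ring; for the second, since $\gammaone$ is an isomorphism, $\rk_\Z\B_\Z=\rk_\Z\Ima\deltanought$, which again equals the real rank of $\deltanought$. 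With matching cardinalities each $\R$-independent family spans, hence is an $\R$-basis; the duality conditions and the memberships $x_e\in\C_\Z$, $x_t\in\B_\Z$ are inherited, so $T$ is an integral spanning tree over the reals.

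The main obstacle is the generation step of the first direction; everything else is either a verbatim transfer of the duality conditions or the standard rank-invariance of an integer matrix under change of coefficients. The crux is that the support lemma lets integrality ($x_e\in\C_\Z$, $x_t\in\B_\Z$) combine with the coefficient-$1$ normalisation in the duality conditions to force $\Z$-generation, and it is exactly here that the stronger integrality demanded of the cuts---namely $\gammaone(x_t)\in\Ima\deltanought$ over $\Z$, not merely that $x_t$ have integer coefficients---is what guarantees $w'\in\B_\Z$ when we subtract $\sum_t a_t x_t$, and hence what makes the $x_t$ legitimate candidate generators of $\B_\Z$.
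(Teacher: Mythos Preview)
Your proof is correct. For the backward implication (integral over the reals $\Rightarrow$ over the integers), your argument is essentially the same as the paper's: where the paper expands $x\in\C_\Z$ over the real basis $(x_e)$ and observes that the chord-coefficient of~$x$ at~$e$ forces the real coefficient~$\lambda_e$ to equal the integer coefficient~$n_e$, you subtract $\sum_e d_e x_e$ and invoke your support lemma to kill the remainder. These are the same computation read two ways.

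For the forward implication, however, your route genuinely differs from the paper's. You argue by counting: $|E\sm T|=\rk_\Z\C_\Z=\dim_\R\C_\R$ and $|T|=\rk_\Z\B_\Z=\dim_\R\B_\R$, invoking the invariance of the rank of an integer matrix under extension of scalars, so that $\R$-independent families of the right size must span. The paper instead shows spanning directly and separately for $\B_\R$ and~$\C_\R$: for~$\B_\R$ it observes that the generators $(\gammaone^{-1}\circ\deltanought\circ\gammanought)(v)$ over~$\R$ coincide with those over~$\Z$ and hence already lie in the $\Z$-span of~$(x_t)$; for~$\C_\R$ it first handles rational chains by clearing denominators and then passes to arbitrary real chains by a density-and-closedness argument in~$\R^{|E|}$. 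Your approach is shorter and more uniform, at the cost of invoking the rank-invariance fact that the paper deliberately avoids (it explicitly eschews matrix and Smith-normal-form arguments in favour of elementary reasoning accessible to graph theorists). The paper's approach, by contrast, is more explicit about why the integer generators suffice over~$\R$, but the density argument for~$\C_\R$ is somewhat roundabout compared to your clean dimension count.
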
%
   \COMMENT{}

\proof
   For the backward implication, let $T\sub E$ be an integral spanning tree over the reals. Its associated bases $(\,x_t\mid t\in T\,)$ of~$\B_\R$ and $(\,x_e\mid e\in E\sm T\,)$ of~$\C_\R$ satisfy the spanning tree axioms, which are otherwise%
   \COMMENT{}
   independent of which coefficients are considered.%
   \COMMENT{}
   By definition of an integral spanning tree, the $x_t$ lie in~$\B_\Z$ and the~$x_e$ lie in~$\C_\Z$. They are clearly linearly independent also over~$\Z$. It remains to show that they generate $\B_\Z$ and~$\C_\Z$, respectively, with integer coefficients.

Any $x\in\C_\Z\sub\C_\R$ has two expansions: as $x = \sum_{e\in E\sm T} \lambda_e x_e$ with real coefficients~$\lambda_e$, and as $x = \sum_{e\in E} n_e e$ with integer coefficients~$n_e$.%
   \COMMENT{}
   This means that, in fact, $\lambda_e = n_e$ for all $e\in E\sm T$: since $\langle x_{e'}, e\rangle = \delta_{ee'}$ by the second spanning tree axiom, the term $\lambda_{e'} x_{e'}$ for $e'\ne e$ has coefficient~0 at~$e$ in its own expansion over~$E$, so the coefficient of $\lambda_e x_e$ over~$E$, which is~$\lambda_e$,%
   \COMMENT{}
   must equal the coefficient of~$x$ at~$e$, which is~$n_e$.%
   \COMMENT{}
   Thus, $x$~has an expansion over $(\,x_e\mid e\in E\sm T\,)$ with integer coefficients, as desired.

It remains to show that every $x\in\B_\Z$ has an expansion with integer coefficients over $(\,x_t\mid t\in T\,)$. As before, $x\in\B_\Z\sub\B_\R$ has two expansions: as $x = \sum_{t\in T}\mu_t x_t$ with real coefficients~$\mu_t$, and as $x = \sum_{e\in E} m_e e$ with integer coefficients~$m_e$. As $\langle x_{t'},t\rangle = \delta_{tt'}$ by the first spanning tree axiom, we see that in fact $\mu_t = m_t$ for all $t\in T$ (as earlier for~$\C_\Z$). Thus, $x$~has an expansion over $(\,x_t\mid t\in T\,)$ with integer coefficients, as desired.

For the forward implication let $T\sub E$ be a spanning tree over the integers with associated module bases $(\,x_t\mid t\in T\,)$ of~$\B_\Z$ and $(\,x_e\mid e\in E\sm T\,)$ of~$\C_\Z$. We shall prove that these subsets of $\B_\Z\sub\B_\R$ and $\C_\Z\sub\C_\R$ generate $\B_\R$ and~$\C_\R$ with real coefficients. As their cardinalities add up to the dimension~$|E|$ of the real vector space~$C_1$, which $\B_\R$ and $\C_\R$ generate by Lemma~\ref{LAexactsequences}, this will show that they are linearly independent too, making them bases of $\B_\R$ and~$\C_\R$. Since they satisfy the spanning tree axioms over~$\R$ as they do over~$\Z$, this will establish $T$ as an integral spanning tree over the reals.

Let us start by showing that, with real coefficients, $(\,x_t\mid t\in T\,)$ generates~$\B_\R$.\penalty-200\ Any $x\in\B_\R$ has%
   \COMMENT{}
   the form $x = (\gammaone^{-1}\circ\deltanought\circ\gammanought)(y)$ for some $y\in C_0$ with real coefficients, where the two isomorphisms~$\gamma$%
   \COMMENT{}
   and the coboundary homomorphism~$\deltanought$ are those of the homology of our hypergraph over the reals. Since $V\!$ is a basis of~$C_0$, it suffices to show that its images $x_v := (\gammaone^{-1}\circ\deltanought\circ\gammanought)(v)$ in~$\B_\R$ under this same map, for all~$v\in V\!$, are generated by the~$x_t$ with real coefficients.

For each $v\in V\!$, however, its image $z_v := (\gammaone^{-1}\circ\deltanought\circ\gammanought)(v)$ in~$\B_\Z$ taken in the homology over~$\Z$ is just our earlier~$x_v$, since applying the boundary homomorphism $\partialone$ to the elements of~$E$ yields the same 0-chain regardless of which coefficients we are considering in our homology.%
   \COMMENT{}

Now as $x_v$ equals $z_v\in\B_\Z$, it is generated (even with integer coefficients) by the basis $(\,x_t\mid t\in T\,)$ of~$\B_\Z$, as was our aim to show.

It remains to show that $(\,x_e\mid e\in E\sm T\,)$ generates~$\C_\R$ with real coefficients. Let $x\in\C_\R$ be given. If all its coefficients are rational, we can find $n\in\Z$ such that $nx\in\C_\Z$, which thus has an expansion over the~$x_e$. Dividing the (integer) coefficients of this expansion by~$n$ yields the desired expansion of~$x$ over the~$x_e$ with real (indeed, rational) coefficients.

For the general case of real coefficients let us think of~$C_1$%
   \COMMENT{}
   as the real vector space~$\R^{|E|}$, and of $\C_\R = \Ker\partialone$ as its subspace~$U$. The real span of the~$x_e$%
   \COMMENT{}
   is a subspace of~$\R^{|E|}$, and hence closed. It is also a subset of~$U$ which, as we have seen, contains its rational points. As these are dense in~$U$,%
   \COMMENT{}
    it equals~$U$.
   \endproof

The hypergraph in Example~\ref{MainExample} will have no spanning tree over the integers.

\begin{PROB}\label{prob:sp.trees}
Which hypergraphs have a spanning tree over the integers?
\end{PROB}

\section{Orthogonal decomposition over the integers}\label{sec:OrthogonalIntegers}

Let us return to the topic of orthogonal decomposition, which motivated us to axiomatise spanning trees for hypergraphs over the reals.
With integer coefficients we no longer have orthogonal decomposition as in Lemma~\ref{LAexactsequences}, even for graphs: although $\C:=\Ker\partial_1$ and $\B:=\gammaone^{-1}(\Ima\delta^0)$ still intersect only in zero (unlike with $\mathbb F_2$ coefficients, say), they need not span~$\E$ between them. To see this, consider a triangle over~$\Z$,%
   \COMMENT{}
   or just two vertices joined by two parallel edges. We shall discuss this example in more detail after the proof of Theorem~\ref{Thm11}.

We do still have $\C = \B^\bot$; see Proposition~\ref{1.9.5} below. The other possible translation of Lemma~\ref{LAexactsequences}, that $\B = \C^\bot$, is trickier. Of course, we still have $\B\sub\C^\bot$ by definition of~$\B$. But this inclusion can be strict, as we shall see in Lemma~\ref{lem:H1} and Example~\ref{MainExample}.%
   \COMMENT{}

So the general problem of when $\B = \C^\bot$ remains open:

\begin{PROB}\label{BisCbot}
Which hypergraphs satisfy $\B = \C^\bot$ over the integers?%
   \COMMENT{}
\end{PROB}

Hypergraphs with spanning trees over the integers are among these:

\begin{PROP}\label{1.9.5}
\begin{enumerate}[\rm(i)]\itemsep=0pt
   \item All hypergraphs satisfy~$\C=\B^\bot$ over the integers.\label{195C}
   \item Hypergraphs with a spanning tree over the integers satisfy $\B=\C^\bot\!$ over the integers.\label{195B}%
   \COMMENT{}
\end{enumerate}	
\end{PROP}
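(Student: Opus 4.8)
The plan is to treat the two statements separately, since (i) holds for all hypergraphs whereas (ii) needs the spanning-tree hypothesis.

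For (i) I would simply unwind the inner product. Recall that $b\in\B$ means $\gammaone(b)=\deltanought(\varphi)=\varphi\circ\partialone$ for some $\varphi\in C^0$, so by symmetry of $\langle\ ,\ \rangle$ one has $\langle x,b\rangle=\gammaone(b)(x)=\varphi(\partialone x)$. As $b$ ranges over $\B$ the form $\gammaone(b)$ ranges over all of $\Ima\deltanought$, so $x\in\B^\bot$ holds precisely when $\varphi(\partialone x)=0$ for every $\varphi\in C^0$. Since $C^0={\rm Hom}(C_0,\Z)$ separates the points of the free $\Z$-module $C_0$ (a nonzero $\sum n_v v$ is detected by the coordinate form~$\varphi_v$), this condition says exactly that $\partialone x=0$, i.e.\ $x\in\Ker\partialone=\C$. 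The chain of equivalences gives $\C=\B^\bot$ directly.

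For (ii) the inclusion $\B\sub\C^\bot$ is automatic (by the same computation $\langle b,c\rangle=\varphi(\partialone c)=0$ for $c\in\C$), so the work is to prove $\C^\bot\sub\B$. I would first invoke Lemma~\ref{sp.tree bases} to turn the given spanning tree over the integers into an integral spanning tree over the reals. This yields a family $(\,x_t\mid t\in T\,)$ that is simultaneously a $\Z$-basis of $\B_\Z$ and an $\R$-basis of $\B_\R$, and a family $(\,x_e\mid e\in E\sm T\,)$ that is both a $\Z$-basis of $\C_\Z$ and an $\R$-basis of $\C_\R$, together with the duality relations $\langle x_t,t'\rangle=\delta_{tt'}$ for all $t'\in T$. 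The main argument is then a passage to the reals: given $y\in\C^\bot$ over~$\Z$, we have $\langle y,c\rangle=0$ for every $c\in\C_\Z$, and since the $x_e$ span $\C_\R$ over~$\R$ this extends by linearity to $\langle y,c\rangle=0$ for all $c\in\C_\R$, so $y\in\C_\R^\bot=\B_\R$ by Lemma~\ref{LAexactsequences}\,\eqref{orthocompchains} read over~$\R$. Writing $y=\sum_{t\in T}\mu_t x_t$ with real coefficients, the duality relation gives $\mu_{t'}=\langle y,t'\rangle$ for each $t'\in T$; but $\langle y,t'\rangle$ is just the coefficient of $t'$ in the expansion of $y$ over the standard basis~$E$, which is an integer since $y\in C_1$. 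Hence every $\mu_t\in\Z$ and $y=\sum_t\mu_t x_t\in\B_\Z=\B$.

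The conceptual content, and the only place the hypothesis enters, is the identification $\C^\bot=C_1\cap\B_\R$ of the integral orthogonal complement with the lattice of integer points of the real cut space, followed by the observation that the integral spanning-tree basis $(\,x_t\mid t\in T\,)$ forces such integer points to have integer coordinates. I expect the main obstacle to be bookkeeping rather than ideas: one must keep the two roles of the $x_t$ — as a $\Z$-basis of $\B_\Z$ and as an $\R$-basis of $\B_\R$ — simultaneously in play, which is exactly what Lemma~\ref{sp.tree bases} licenses, and one must evaluate the duality relation $\langle x_t,t'\rangle=\delta_{tt'}$ against the standard basis~$E$ (where $\langle y,t'\rangle$ reads off an integer coefficient of~$y$) rather than against the $x_e$. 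Without a spanning tree over the integers there need be no such basis, and indeed Example~\ref{MainExample} will show $\B\subsetneq\C^\bot$ can occur.
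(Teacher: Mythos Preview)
Your proposal is correct and follows essentially the same route as the paper. For~(i) the paper picks the particular witness $\varphi=\gammanought(\partial x)$ and invokes Lemma~\ref{lem:innerproduct} to get $\langle x,x\rangle_\partial\ne 0$, whereas you observe more directly that the coordinate forms $\varphi_v$ separate points of~$C_0$; these are the same argument phrased differently. For~(ii) your proof is identical to the paper's: invoke Lemma~\ref{sp.tree bases}, pass to~$\B_\R$ via Lemma~\ref{LAexactsequences}, and read off the integrality of the coefficients~$\mu_t$ from $\langle x_t,t'\rangle=\delta_{tt'}$ against the standard basis~$E$.
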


\proof
As $\psi(x)=0$ for all $\psi\in\Ima\delta^0$ and $x\in\Ker\partial_1$, we have $\C\sub\B^\bot$ and ${\B\sub\C^\bot}$ in both cases, \eqref{195C} and~\eqref{195B}.%
   \COMMENT{}

\eqref{195C} For a proof of~$\B^\bot\sub\C$ we have to show that any $x\in C_1$ which every $\delta(\varphi)$ with $\varphi\in C^0$ sends to $0\in\Z$ lies in~$\Ker\partial$. But this follows from Lemma~\ref{lem:innerproduct}: for any $x\in C_1\sm\Ker\partial$, choosing  $\varphi := \gammanought(\partial x)$ yields $\delta(\varphi)(x) = \varphi(\partial x) = \langle x,x\rangle_\partial\ne 0$. 

  \eqref{195B} By Lemma~\ref{sp.tree bases}, our hypergraph also has an integral spanning tree~$T$ over the reals; let $(\,x_t\mid t\in T\,)$ and $(\,x_e\mid e\in E\sm T\,)$ be its associated bases of~$\B_\R$ and~$\C_\R$. As all the~$x_t$ lie in~$\B_\Z = \B$, by the definition of integral spanning trees, it suffices for a proof of~$\C^\bot\sub\B$ to show that every $x\in\C^\bot$ has an expansion over $(\,x_t\mid t\in T\,)$ with integer coefficients.

Since all the~$x_e$ lie in~$\C_\Z = \C$,%
   \COMMENT{}
   we have $\langle x,x_e\rangle = 0$ for all $e\in E\sm T$. But the~$x_e$ generate~$\C_\R$ over the reals, so this implies that $x\in\C_\R^\bot$ too. By Lemma~\ref{LAexactsequences} we have $\C_\R^\bot = \B_\R$, so~$x$ has an expansion $x = \sum_{t\in T}\mu_t x_t$ with real coefficients~$\mu_t$. By definition,%
   \COMMENT{}
   $x$~also has an expansion $x = \sum_{e\in E} m_e e$ with integer coefficients~$m_e$. As $\langle x_{t'},t\rangle = \delta_{tt'}$ by the first spanning tree axiom, we see that in fact $\mu_t = m_t$ for all $t\in T$ (as in the proof of Lemma~\ref{sp.tree bases}). Thus, $x$~has an expansion over $(\,x_t\mid t\in T\,)$ with integer coefficients, as desired.
   \endproof

Theorem~\ref{Thm5} offers an algebraic characterisation of the hypergraphs that satisfy $\B=\C^\bot$ over the integers. However it would be interesting to have a combinatorial characterisation too. 

\section{Homology groups}\label{sec:H1}%

Let us now take a closer look at the homology groups of hypergraphs. For graphs, as for hypergraphs, the first homology group~$H_1$ is simply~$\Ker\partial_1$, and the first cohomology group~$H^1$ is~$C^1/\Ima\deltanought$.

In the case of (connected) graphs there are natural isomorphisms%
   \COMMENT{}
   between these, as follows. Fix any spanning tree~$T$. For any (oriented) chord $e=uv$ of~$T$ write $x_e := \sum_{e'\in E}\lambda_{e'} e'$ with $\lambda_{e'} = 1$ for $e'=e$ and all oriented edges~$e'$ on the path in~$T$ from $v$ to~$u$,%
   \footnote{We assume here that these edges~$e'$ are oriented in the direction of this path, from $v$ towards~$u$~-- which, of course, need not be the case. Formally we put $\lambda_{e'} = 1$ if this orientation of~$e'$ happens to be its default orientation, and $\lambda_{e'} = -1$ otherwise.}
   and $\lambda_{e'} = 0$ for all other edges~$e'$. (Thus, $x_e$ sends a flow of~1 round the fundamental cycle of~$e$ and is zero elsewhere.) Then $x_e\in\Ker\partialone$.
   As earlier, write $\psi_e$ for the 1-cochain that sends $e$ to~1 and all other edges to~0.\penalty-200\ These $\psi_e$ represent non-zero classes in~$C^1/\Ima\deltanought$, since any $\psi\in\Ima\deltanought$ that assigns zero to the edges of~$T$ (as the~$\psi_e$ do) has the form $\psi=\varphi\circ\partial$ with $\varphi$ constant on~$V\!$, so it also sends the chords of~$T$ to zero.%
   \COMMENT{}
   In fact, the $x_e$ freely generate~$\Ker\partialone$%
   \COMMENT{}
   and the $[\psi_e]$ freely generate~$C^1/\Ima\deltanought$%
   \COMMENT{}
   as abelian groups or $\Z$-modules. So $x_e\mapsto [\psi_e]$ defines a natural isomorphism $H_1\to H^1$ for graphs.%
   \COMMENT{}
   All this is well known~\cite{Hatcher}.

As we shall see in a moment,%
   \COMMENT{}
   the above construction of an isomorphism ${H_1\to H^1}$ for graphs carries over to hypergraphs with real coefficients, with spanning trees as provided by Theorem~\ref{spanningtrees}.%
   \COMMENT{}
   With integer coefficients things are more complicated, but our results in this section will establish the following:

\begin{THM}
Hypergraphs that have an algebraic spanning tree over the integers satisfy $H_1\simeq H^1\!$ over the integers.	
\end{THM}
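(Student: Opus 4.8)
The plan is to mimic the classical graph isomorphism $H_1 \to H^1$ recalled just above the statement, replacing the topological spanning tree with an algebraic one. Let $T \subseteq E$ be a spanning tree over the integers, with associated bases $(\,x_e \mid e \in E \sm T\,)$ of $\C_\Z = \Ker\partial_1 = H_1$ and $(\,x_t \mid t \in T\,)$ of $\B_\Z$. The fundamental cycles $x_e$ freely generate $H_1$ as a $\Z$-module by the second spanning tree axiom. The candidate isomorphism is the map sending each $x_e$ to the class $[\psi_e] \in C^1/\Ima\delta^0 = H^1$, where $\psi_e$ is the cochain sending $e \in E$ to $1$ and all other edges to $0$. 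So the first step is to define $H_1 \to H^1$ on the basis by $x_e \mapsto [\psi_e]$ and extend $\Z$-linearly.

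First I would verify that the classes $[\psi_e]$ with $e \in E \sm T$ freely generate $H^1 = C^1/\Ima\delta^0$ as a $\Z$-module; this is the crux of the argument. The natural route is to use the other fundamental basis. By Lemma~\ref{sp.tree bases} the spanning tree $T$ over the integers is also an integral spanning tree over the reals, so each $\gamma_1(x_t)$ is the $\delta^0$-image of an integer $0$-cochain, i.e.\ $\gamma_1(x_t) \in \Ima\delta^0$. The first spanning tree axiom gives $\langle x_t, t'\rangle = \delta_{tt'}$, which says that in $C^1$ the cochain $\gamma_1(x_t)$ sends each $t' \in T$ to $\delta_{tt'}$; hence modulo $\Ima\delta^0$ we may adjust any $\psi \in C^1$ by an integer combination of the $\gamma_1(x_t)$ so that its values on all of $T$ become zero. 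This shows the classes $[\psi_e]$, $e \in E \sm T$, generate $H^1$. For freeness I would argue that a relation $\sum_e n_e [\psi_e] = 0$ means $\sum_e n_e \psi_e = \gamma_1\!\big(\sum_t m_t x_t\big)$ for some integers $m_t$ (using that the $\gamma_1(x_t)$ are a $\Z$-basis of $\Ima\delta^0$ since $T$ is an integral spanning tree); evaluating both sides at $e' \in E \sm T$ and using $\langle x_t, e'\rangle = 0$ forces each $n_{e'} = 0$.

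Granting that $(\,[\psi_e]\mid e\in E\sm T\,)$ is a free $\Z$-basis of $H^1$, the map $x_e \mapsto [\psi_e]$ sends one free $\Z$-basis to another and is therefore an isomorphism $H_1 \to H^1$, completing the proof. The main obstacle I anticipate is precisely the generation/freeness claim for $H^1$: over $\Z$ one cannot invoke dimension counting as over $\R$, and must instead exploit that $\Ima\delta^0$ is \emph{freely} generated over $\Z$ by the $\gamma_1(x_t)$, which is exactly the extra integrality content supplied by the hypothesis that $T$ is a spanning tree over the integers (equivalently, an integral spanning tree over the reals, via Lemma~\ref{sp.tree bases}). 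The bilinear pairing $\langle\ ,\ \rangle$ restricted to the complementary index sets $T$ and $E \sm T$ behaves like an identity block against off-diagonal zero blocks, and this block structure is what makes the two fundamental bases dual to each other and drives both the generation and the freeness arguments.
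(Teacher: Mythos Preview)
Your overall strategy is sound but differs from the paper's. Rather than constructing the isomorphism directly, the paper invokes Proposition~\ref{1.9.5}\,\eqref{195B} to obtain $\B=\C^\bot$ over~$\Z$ and then appeals to Theorem~\ref{Thm5}, where $\B=\C^\bot$ is condition~\eqref{B=Cbot} and $H_1\simeq H^1$ canonically is condition~\eqref{H1=H1}. Your explicit construction parallels the classical graph argument more closely and is self-contained; the paper's route is shorter given the surrounding machinery and yields the stronger conclusion that the isomorphism is canonical in the sense of Theorem~\ref{Thm5}.

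There is, however, a genuine error in your freeness step: the claim $\langle x_t, e'\rangle = 0$ for $e'\in E\sm T$ is false. The first spanning tree axiom controls $\langle x_t, t'\rangle$ only for $t'\in T$; already in graphs a fundamental cut~$x_t$ contains chords, so $\langle x_t,e'\rangle\ne 0$ for those~$e'$. The fix is to evaluate not at~$e'$ but at the fundamental cycle~$x_{e'}\in\C$: since $x_t\in\B\sub\C^\bot$ we have $\langle x_t,x_{e'}\rangle=0$, while $\psi_e(x_{e'})=\langle e,x_{e'}\rangle=\delta_{ee'}$ by the second axiom, so evaluating $\sum_e n_e\psi_e = \gamma_1\big(\sum_t m_t x_t\big)$ at~$x_{e'}$ yields $n_{e'}=0$. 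Even more directly, every element of~$\Ima\delta^0$ annihilates $\Ker\partial_1$, so you need not expand over the~$\gamma_1(x_t)$ at all. As a minor aside, the detour through Lemma~\ref{sp.tree bases} is unnecessary: the first spanning tree axiom over~$\Z$ already places each $x_t$ in~$\B_\Z=\gamma_1^{-1}(\Ima\delta^0)$, giving $\gamma_1(x_t)\in\Ima\delta^0$ directly.
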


\proof
   By Proposition~\ref{1.9.5}\,\eqref{195B}, hypergraphs with an algebraic spanning tree over the integers satisfy $\B=\C^\bot$ over the integers. By Theorem~\ref{Thm5} below, such hypergraphs also satisfy $H_1\simeq H^1$.
   \endproof

   The hypergraph in Example~\ref{MainExample} below is such that $H_1\not\simeq H^1$, but it has no spanning tree over the integers. 

\medbreak

Our main aim in this section is to prove that $H_1\simeq H^1$ if $\partial_1$ satisfies a certain condition. If it does, we shall in fact prove that $H_1$ is {\em canonically\/} isomorphic to~$H^1$ in the sense that, in the terminology of Figure~\ref{LAexact},
 $$H_1 = \Ker\partial_1\simeq (\Ker\partial_1)^* = \Ima i^*\simeq C^1/\Ker i^* = C^1/\Ima\delta^0 = H^1$$
 with the isomorphism $\pi\colon [\psi]\mapsto i^*(\psi) = \psi\restricts\Ker\partial_1$ from $C^1/\Ker i^*$ to~$(\Ker\partial_1)^*$%
   \COMMENT{}
   and any isomorphism $\Ker\partial_1\to (\Ker\partial_1)^*$.%
   \COMMENT{}
   With real coefficients, the line displayed above amounts to a proof of $H_1\simeq H^1$. But with integer coefficients it does not, since $\Ima\delta^0$ can be contained properly in the annihilator~$\Ker i^*$ of~$\Ker\partial_1$ in~$C^1$. However we shall find a condition which characterises the hypergraphs for which these two submodules of~$C^1$ coincide, and then prove that $H_1\simeq H^1$ canonically whenever this condition is met. In order to do this, we need some preparation.

Recall that a submodule $M'$ of a module~$M$ is a {\em direct summand\/} of~$M$ if $M = M'\oplus M''$ for some other module $M''\sub M$, which means that every $m\in M$ has a unique representation as $m = m'+m''$ with $m'\in M'$ and $m''\in M''$. 

\begin{THM}\label{Thm5}
	The following statements are equivalent for hypergraphs $(V,E)$ with integer coefficients:%
   \COMMENT{}
   \begin{enumerate}[\rm (i)]\itemsep=0pt
      \item 	The homology groups $H_1\!$ and~$H^1\!$ of $(V,E)$ are canonically isomorphic.\label{H1=H1}
      \item The annihilator of~$\Ker\partial_1$ in~$C^1\,$equals~$\Ima\delta^0$.\label{A=Im}
      \item $\,\B=\C^\bot$ over the integers (in the notation of Section~\ref{sec:OrthogonalIntegers}).\label{B=Cbot}
      \item The image of~$\partial_1$ is a direct summand of~$C_0$.\label{Imd}\footnote{Note that while $\Ima\partial_1$ is clearly a direct summand of~$C_0$ if it is the span of a subset of~$V\!$, this is by no means necessary. It will be the span of a subset of some basis of~$C_0$,%
   \COMMENT{}
   but this basis need not be~$V\!$.%
   \COMMENT{}}
      \item The map $[\psi]\mapsto \psi\restricts\Ker\partial_1$ is an isomorphism $H^1 \to {\rm Hom}(H_1,\Z)$.\label{HomH1}
   \end{enumerate}
\end{THM}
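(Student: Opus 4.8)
The plan is to prove the cycle of equivalences $\eqref{H1=H1}\Rightarrow\eqref{A=Im}\Rightarrow\eqref{B=Cbot}\Rightarrow\eqref{Imd}\Rightarrow\eqref{A=Im}$ together with the equivalence $\eqref{A=Im}\Leftrightarrow\eqref{HomH1}$, since several of these are near-tautologies given the setup in Figure~\ref{LAexact} and the displayed isomorphism line preceding the theorem. First I would dispose of the easy translations. The equivalence $\eqref{A=Im}\Leftrightarrow\eqref{B=Cbot}$ is immediate from the definitions: $\B=\gammaone^{-1}(\Ima\delta^0)$ and $\C^\bot$ is, via~$\gammaone$, exactly the annihilator of~$\C=\Ker\partial_1$ in~$C^1$ (orthogonality in~$C_1$ corresponds to annihilation in~$C^1$ under the basis-compatible isomorphism~$\gammaone$), so $\B=\C^\bot$ says precisely that $\Ima\delta^0$ equals that annihilator. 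For $\eqref{A=Im}\Leftrightarrow\eqref{HomH1}$, recall from the displayed line that $i^*\colon C^1\to(\Ker\partial_1)^*$ has kernel equal to the annihilator of~$\Ker\partial_1$ and image all of~$(\Ker\partial_1)^*$ (the restriction map $C^1\to\mathrm{Hom}(\Ker\partial_1,\Z)$ is surjective because $\Ker\partial_1$ is a direct summand of the free module~$C_1$, so every homomorphism on it extends). Hence $[\psi]\mapsto\psi\restricts\Ker\partial_1$ is a well-defined isomorphism $C^1/\Ima\delta^0=H^1\to\mathrm{Hom}(H_1,\Z)$ exactly when $\Ima\delta^0$ coincides with $\Ker i^*=$ the annihilator, which is~\eqref{A=Im}.

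Next, $\eqref{A=Im}\Rightarrow\eqref{H1=H1}$ is read straight off the displayed chain of isomorphisms: when the annihilator~$\Ker i^*$ equals~$\Ima\delta^0$, that chain gives $H_1=\Ker\partial_1\simeq(\Ker\partial_1)^*=\Ima i^*=C^1/\Ker i^*=C^1/\Ima\delta^0=H^1$, canonical up to the choice of an isomorphism $\Ker\partial_1\to(\Ker\partial_1)^*$, which exists since both are free of the same rank. The reverse direction $\eqref{H1=H1}\Rightarrow\eqref{A=Im}$ requires unwinding what ``canonically isomorphic'' means in the sense fixed before the theorem, namely that the specific map~$\pi\colon[\psi]\mapsto\psi\restricts\Ker\partial_1$ is an isomorphism; but that is literally statement~\eqref{HomH1}, which we have already tied to~\eqref{A=Im}. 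So the only genuinely substantive implication is the linear-algebra-over-$\Z$ heart of the theorem, linking the direct-summand condition~\eqref{Imd} to the others.

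The main work, and the step I expect to be the obstacle, is $\eqref{Imd}\Leftrightarrow\eqref{A=Im}$ (equivalently $\eqref{B=Cbot}\Leftrightarrow\eqref{Imd}$). Here is where the elementary divisor theorem enters. Apply it to $\partial_1\colon C_1\to C_0$: there are bases of~$C_1$ and~$C_0$ in which $\partial_1$ is diagonal with entries $d_1\mid d_2\mid\cdots\mid d_r$ and zeros. Then $\Ima\partial_1$ is a direct summand of~$C_0$ precisely when all nonzero~$d_i$ equal~$1$, i.e.\ when $\partial_1$ maps a subset of its adapted basis isomorphically onto a subset of a basis of~$C_0$. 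I would show that this same condition is equivalent to the annihilator of~$\Ker\partial_1$ in~$C^1$ being exactly~$\Ima\delta^0$: in the adapted bases, $\Ker\partial_1$ is spanned by the basis vectors killed by~$\partial_1$, its annihilator in~$C^1$ is spanned by the dual vectors to the non-kernel basis elements, whereas $\Ima\delta^0=\Ima\partial_1^\top$ is spanned by the $d_i$-multiples of those same dual vectors; these two submodules coincide iff every~$d_i=1$. The delicacy is that the adapted bases produced by the elementary divisor theorem are not the standard bases $E$ and $V$, so one must argue that the summand property and the annihilator equality are intrinsic (basis-independent) statements and hence may be checked in the convenient bases --- exactly the subtlety flagged in the footnote to~\eqref{Imd}. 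I would make this precise by phrasing \eqref{Imd} as ``$C_0/\Ima\partial_1$ is torsion-free'' and observing that the torsion of $C_0/\Ima\partial_1$ is governed by the elementary divisors, while the annihilator-versus-image gap is governed by the same divisors via the cokernel $\Ker i^*/\Ima\delta^0$, which I expect to identify with (a group isomorphic to) the torsion subgroup of~$C_0/\Ima\partial_1$.
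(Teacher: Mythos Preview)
Your proposal is correct, and for the implications $\eqref{A=Im}\Leftrightarrow\eqref{B=Cbot}$, $\eqref{A=Im}\Leftrightarrow\eqref{HomH1}$, $\eqref{A=Im}\Rightarrow\eqref{H1=H1}$ and $\eqref{H1=H1}\Rightarrow\eqref{A=Im}$ it matches the paper's argument essentially line for line: both you and the paper use surjectivity of $i^*$ (via $\Ker\partial_1$ being a direct summand of~$C_1$) and the isomorphism theorem to identify $C^1/\Ker i^*$ with $(\Ker\partial_1)^*$, and both interpret ``canonically isomorphic'' exactly as the statement that $\Ker i^*=\Ima\delta^0$.

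Where you diverge is in the core equivalence $\eqref{A=Im}\Leftrightarrow\eqref{Imd}$. The paper proves this as Lemma~\ref{lem:H1}\,\eqref{annih2}, arguing via the extension criterion of Lemma~\ref{Lalpha}: if $\Ima\partial_1$ is a direct summand, any $\psi$ annihilating $\Ker\partial_1$ descends to a map $\Ima\partial_1\to\Z$ which one then extends to~$C_0$; and conversely, a non-extendable $\varphi'\colon\Ima\partial_1\to\Z$ yields $\psi=\varphi'\circ\partial_1$ in the annihilator but not in~$\Ima\delta^0$. You instead put $\partial_1$ into Smith normal form and read off both the direct-summand condition and the quotient $\Ker i^*/\Ima\delta^0$ as being controlled by the same elementary divisors. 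This is exactly the matrix route the paper alludes to in its introduction and acknowledgement but deliberately avoids for expository reasons. Your approach is shorter and makes the obstruction (the torsion of $C_0/\Ima\partial_1$) completely explicit; the paper's approach stays closer to the module-theoretic language and reuses Lemma~\ref{Lalpha} rather than invoking the full Smith form. Your remark that the relevant conditions are basis-independent, so the computation may be done in adapted bases, is the right way to handle the subtlety you flag.
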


Theorem~\ref{Thm11}.%
   \COMMENT{}
   will show that graphs satisfy the assertions of Theorem~\ref{Thm5}. Let us call hypergraphs that satisfy these assertions {\it algebraically graph-like\/}.
   Example~\ref{MainExample} shows a simple hypergraph that is not algebraically graph-like.

\medbreak

For our proof of Theorem~\ref{Thm5} we need a few lemmas. The first is the elem\-entary divisor theorem for submodules of free modules%
  \COMMENT{}
   over principal ideal domains~\cite{BoschLA}. For $\Z$-modules it says the following:

\begin{LEM}\label{ETS}
	Let $M'$ be a submodule of a free $\Z$-module~$M$ of finite rank. Then $M$ has a basis $(x_1,\dots,x_m)$ such that $M'$ has a basis $(n_1 x_1, \dots, n_k x_k)$ with $k\le m$ and $n_i | n_{i+1}$ for all $i=1,\dots,k-1$.
\end{LEM}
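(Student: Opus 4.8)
The plan is to prove this by induction on the rank~$m$ of~$M$, using the classical content-minimisation argument for linear forms, which is completely elementary and sidesteps any explicit manipulation of the Smith normal form. If $M'=0$ the statement is trivial (take any basis of~$M$ and $k=0$), so assume $M'\ne 0$. Among all $\Z$-linear forms $\varphi\colon M\to\Z$, consider the images $\varphi(M')$: each is a subgroup of~$\Z$, hence of the form $n_\varphi\Z$ with $n_\varphi\ge 0$, and since $M$ is free and $M'\ne 0$ some coordinate form is nonzero on~$M'$, so at least one $n_\varphi$ is positive. By well-ordering I would choose $\varphi_1$ minimising the positive value $n_1:=n_{\varphi_1}$, and pick $y_1\in M'$ with $\varphi_1(y_1)=n_1$.

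The crucial step is to show that $n_1\mid\varphi(y_1)$ for \emph{every} form $\varphi\colon M\to\Z$. Writing $d=\gcd(n_1,\varphi(y_1))=an_1+b\varphi(y_1)$, the form $a\varphi_1+b\varphi$ sends $y_1$ to~$d$, so $n_{a\varphi_1+b\varphi}\mid d\mid n_1$; by minimality of~$n_1$ this forces $d=n_1$, i.e.\ $n_1\mid\varphi(y_1)$. Applying this to the coordinate forms of a fixed basis of~$M$ shows that every coordinate of~$y_1$ is divisible by~$n_1$, so $y_1=n_1x_1$ for some $x_1\in M$, and then $\varphi_1(x_1)=1$.

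Since $\varphi_1(x_1)=1$, the form $\varphi_1$ splits off $\Z x_1$ as a direct summand, so that $M=\Z x_1\oplus\Ker\varphi_1$ and, after a short check, $M'=\Z y_1\oplus(M'\cap\Ker\varphi_1)$. Now $\Ker\varphi_1$ is free of rank $m-1$ and $M'\cap\Ker\varphi_1$ is a submodule of it, so the induction hypothesis yields a basis $(x_2,\dots,x_m)$ of~$\Ker\varphi_1$ together with a basis $(n_2x_2,\dots,n_kx_k)$ of $M'\cap\Ker\varphi_1$ satisfying $n_2\mid n_3\mid\cdots\mid n_k$. Then $(x_1,\dots,x_m)$ is a basis of~$M$ and $(n_1x_1,n_2x_2,\dots,n_kx_k)$ is a basis of~$M'$; freeness of~$M'$ is not assumed but drops out of this induction.

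It remains to secure the first link $n_1\mid n_2$ of the divisibility chain. Letting $\varphi_2$ be the coordinate form for~$x_2$ in the basis $(x_1,\dots,x_m)$ and setting $\varphi:=\varphi_1+\varphi_2$, one has $\varphi(y_1)=n_1$ and $\varphi(n_2x_2)=n_2$, so $\varphi(M')\supseteq n_1\Z+n_2\Z=\gcd(n_1,n_2)\Z$; hence $n_\varphi\mid\gcd(n_1,n_2)\mid n_1$, and minimality of~$n_1$ gives $n_\varphi=n_1$, whence $n_1\mid n_2$. I expect this divisibility bookkeeping to be the main obstacle: steps one and two (the existence and arithmetic of the minimising functional) carry the real content, and the closing argument has to be threaded carefully through the induction so that the minimality of~$n_1$ over \emph{all} of~$M$ is still at hand after passing to~$\Ker\varphi_1$.
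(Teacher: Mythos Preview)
Your proof is correct; it is the classical minimisation argument for the elementary divisor theorem, carried out cleanly (the splitting $M=\Z x_1\oplus\Ker\varphi_1$ and $M'=\Z y_1\oplus(M'\cap\Ker\varphi_1)$, and the final $n_1\mid n_2$ step via $\varphi_1+\varphi_2$, are all in order). The paper, however, does not prove this lemma at all: it is quoted as the elementary divisor theorem with a reference to a textbook, so there is no in-paper argument to compare against. Your write-up thus supplies a self-contained proof where the paper simply cites the result.
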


\noindent
   The integers $n_1,\dots,n_k$ are the {\em elementary divisors\/} of~$M'$ in~$M$. Up to their sign%
   \COMMENT{}
   they are uniquely determined by~$M'$ and~$M$; in particular, they do not depend on the choice of $(x_1,\dots,x_m)$.

\medbreak

One of the many consequences of Lemma~\ref{ETS} is that submodules of free $\Z$-modules of finite rank are also free. Here are some more consequences that we shall use repeatedly:

\begin{LEM}\label{Lalpha}
	The following statements are equivalent for submodules~$M'$ of a free $\Z$-module~$M\!$ of finite rank:
   \begin{enumerate}[{\rm (i)}]\itemsep=0pt\vskip-\smallskipamount\vskip0pt
   \item all the elementary divisors of~$M'$ are~$\pm 1$;
\item $M'$ is a direct summand of~$M$;
\item every homomorphism $\varphi'\colon M'\to\Z$ extends to a homomorphism ${\varphi\colon M\to\Z}$.
   \end{enumerate}
\end{LEM}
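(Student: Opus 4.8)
The plan is to prove the three conditions equivalent via the cycle of implications (i) $\Rightarrow$ (ii) $\Rightarrow$ (iii) $\Rightarrow$ (i), relying throughout on Lemma~\ref{ETS}, the elementary divisor theorem. The content of (i)~$\Leftrightarrow$~(ii) is essentially a reformulation: a submodule $M'$ of a free $\Z$-module $M$ is a direct summand precisely when it has no nontrivial torsion in its complement, which the elementary divisors detect. First I would apply Lemma~\ref{ETS} to obtain a basis $(x_1,\dots,x_m)$ of~$M$ with $(n_1 x_1,\dots,n_k x_k)$ a basis of~$M'$ and $n_i\mid n_{i+1}$.

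For (i)~$\Rightarrow$~(ii): assuming all $n_i=\pm 1$, I would set $M'' := \langle x_{k+1},\dots,x_m\rangle$ and observe that, after absorbing the unit signs into the basis vectors (so that $n_ix_i = \pm x_i$ is again a basis element up to sign), $M' = \langle x_1,\dots,x_k\rangle$. Then $M = M'\oplus M''$ because $(x_1,\dots,x_m)$ is a basis: every element has a unique $\Z$-linear expansion, and splitting this expansion at index $k$ gives the required unique decomposition. For (ii)~$\Rightarrow$~(iii): given $M = M'\oplus M''$ and a homomorphism $\varphi'\colon M'\to\Z$, I would define $\varphi\colon M\to\Z$ by $\varphi(m'+m'') := \varphi'(m')$ (projecting onto the $M'$-summand and then applying $\varphi'$); this is well-defined and additive precisely because the decomposition is direct and unique, and it visibly restricts to~$\varphi'$ on~$M'$.

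The step I expect to carry the real weight is (iii)~$\Rightarrow$~(i), proved most cleanly in contrapositive form. Suppose some elementary divisor satisfies $|n_j| > 1$; I would exhibit a homomorphism $\varphi'\colon M'\to\Z$ that does not extend. Using the basis $(n_1x_1,\dots,n_kx_k)$ of~$M'$, define $\varphi'$ to send $n_jx_j\mapsto 1$ and all other basis elements $n_ix_i\mapsto 0$. Any extension $\varphi\colon M\to\Z$ would have to satisfy $n_j\,\varphi(x_j) = \varphi(n_jx_j) = \varphi'(n_jx_j) = 1$, which is impossible in~$\Z$ since $n_j\nmid 1$ when $|n_j|>1$. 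This forces all elementary divisors to be $\pm 1$, establishing~(i).

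The main obstacle, such as it is, is purely bookkeeping: keeping the unit factors from Lemma~\ref{ETS} (the $n_i=\pm 1$ case in (i)~$\Rightarrow$~(ii)) properly aligned with the basis so that $M'$ really is generated by an initial segment of a basis of~$M$, and being careful in (iii)~$\Rightarrow$~(i) to test extendability against the correct generator~$n_jx_j$ rather than~$x_j$. Once the elementary divisor normal form is in hand, each implication is a short divisibility argument, so no genuinely hard step remains; the theorem is really a dictionary translating the three standard characterisations of a direct summand of a free $\Z$-module into one another.
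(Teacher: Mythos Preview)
Your proof is correct and follows essentially the same route as the paper: both argue the cycle (i)$\Rightarrow$(ii)$\Rightarrow$(iii)$\Rightarrow$(i) using the bases supplied by Lemma~\ref{ETS}, with the contrapositive of (iii)$\Rightarrow$(i) established by defining $\varphi'$ to send a basis element $n_j x_j$ with $|n_j|>1$ to~$1$ and observing that no $\varphi\in{\rm Hom}(M,\Z)$ can satisfy $n_j\varphi(x_j)=1$. The only cosmetic difference is that the paper singles out the largest divisor~$n_k$ (which the divisibility chain guarantees is non-unit if any is), whereas you work with an arbitrary non-unit~$n_j$; both choices work.
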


\begin{proof}
 Consider bases of $M$ and~$M'$ as in Lemma~\ref{ETS}.

	(i)$\to$(ii) If (i) holds then $(n_1 x_1, \dots, n_k x_k,\, x_{k+1},\dots, x_m)$ too is a basis of~$M$, which proves~(ii).%
   \COMMENT{}

(ii)$\to$(iii) is clear.%
   \COMMENT{}

(iii)$\to$(i) Suppose (i) fails; then the `largest' elementary divisor $n_k$ of~$M'$ is not a unit: $n_k\ne\pm1$. Define a homomorphism $\varphi'\colon M'\to\Z$ by mapping its base elements $n_k x_k$ to~1 and $n_i x_i$ to~0 for all $i < k$. Now consider any homomorphism $\varphi\colon M\to\Z$; we show that $\varphi\restricts M'\ne\varphi'$. Indeed if $\varphi(x_k) = 0$ then $\varphi(n_k x_k) = 0\ne 1 = \varphi'(n_k x_k)$. But if $\varphi(x_k)\in\Z\sm\{0\}$ then, as $n_k\ne\pm 1$, we have $\varphi(n_k x_k) = n_k\varphi(x_k)\ne 1 = \varphi'(n_k x_k)$.
\end{proof}

\begin{LEM}\label{Lemma7}
	The kernel of a homomorphism $f\colon M\to N$ between two free $\Z$-modules of finite rank is always a direct summand of~$M$.
\end{LEM}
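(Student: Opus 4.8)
The plan is to split the short exact sequence
\[
0 \longrightarrow \Ker f \longrightarrow M \specrelabove\longrightarrow{f} \Ima f \longrightarrow 0
\]
using the fact that $\Ima f$ is a \emph{free} module. Indeed, $\Ima f$ is a submodule of the free $\Z$-module~$N$ of finite rank, and hence itself free by the consequence of Lemma~\ref{ETS} recorded just above. Once we exhibit a splitting we obtain $M = \Ker f \oplus M''$ with $M'' \cong \Ima f$, which is precisely the assertion that $\Ker f$ is a direct summand of~$M$.

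To construct the splitting explicitly, I would fix a basis $(b_1, \dots, b_r)$ of~$\Ima f$ and, for each~$j$, choose some $c_j \in M$ with $f(c_j) = b_j$ (possible since $b_j \in \Ima f$). Let $s \colon \Ima f \to M$ be the homomorphism sending $b_j \mapsto c_j$, and put $M'' := s(\Ima f)$, the span of the~$c_j$. The two things to verify are that $M = \Ker f + M''$ and that $\Ker f \cap M'' = \{0\}$. For the first, given $m \in M$ write $f(m) = \sum_j \lambda_j b_j$ and set $m'' := \sum_j \lambda_j c_j = s(f(m))$; then $f(m - m'') = f(m) - \sum_j \lambda_j b_j = 0$, so $m - m'' \in \Ker f$ and $m = (m - m'') + m''$. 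For the second, if $s(w) \in \Ker f$ then $0 = f(s(w)) = w$ (as $f \circ s = \mathrm{id}_{\Ima f}$), whence $s(w) = 0$.

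The argument is almost entirely routine; the only real input is that $\Ima f$ is free, which guarantees a basis along which to choose the preimages~$c_j$, i.e.\ the existence of the section~$s$, so I do not expect any genuine obstacle. As an alternative in the spirit of the preceding lemmas, one could argue directly via elementary divisors: take bases $(x_1, \dots, x_m)$ of~$M$ and $(n_1 x_1, \dots, n_k x_k)$ of~$\Ker f$ as in Lemma~\ref{ETS}, and observe that if some $n_i \neq \pm 1$ then $n_i x_i \in \Ker f$ forces $n_i f(x_i) = 0$, hence $f(x_i) = 0$ since~$N$ is torsion-free; but then $x_i \in \Ker f$ has $x_i$-coefficient~$1$ when expanded over the basis $(n_1 x_1, \dots, n_k x_k)$, forcing $n_i \mid 1$, a contradiction. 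So all elementary divisors are~$\pm 1$, and $\Ker f$ is a direct summand by Lemma~\ref{Lalpha}. I expect the splitting argument to be the cleanest to write out.
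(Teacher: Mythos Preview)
Your proposal is correct. Interestingly, you present two approaches, and your \emph{alternative} (via elementary divisors and torsion-freeness of~$N$) is exactly the proof the paper gives, while your preferred \emph{main} argument (splitting the short exact sequence through a section of the free module~$\Ima f$) is a genuinely different route.

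Both ultimately lean on Lemma~\ref{ETS}, but in different ways. The paper applies it to~$\Ker f$ inside~$M$ and then argues by contradiction that a non-unit elementary divisor would produce torsion in~$N$; your splitting argument applies it only through its corollary that~$\Ima f\sub N$ is free, which is enough to build a section. Your version is arguably more conceptual and reusable (it is the standard ``free modules are projective'' argument), and it avoids invoking Lemma~\ref{Lalpha}. The paper's version, on the other hand, stays closer to the elementary-divisor language used throughout Section~\ref{sec:H1} and makes the role of torsion-freeness of~$N$ more visible. Either is perfectly adequate here.
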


\proof
   Consider bases of $M$ and~$M':=\Ker f$ as in Lemma~\ref{ETS}. If $M'$ is not a direct summand of~$M$ then, by Lemma~\ref{Lalpha}, its `largest' elementary divisor is not a unit: $n_k\ne\pm1$. Since $(n_1 x_1,\dots,n_k x_k)$ is a basis of~$M'$, and $n_k x_k\in M'$ has a unique representation over this basis, this means that $x_k\notin M'$.%
   \COMMENT{}
  So $f(n_k x_k) = 0$ since $n_k x_k\in\Ker f$, while $f(x_k)\ne 0$ since $x_k\notin\Ker f$.

\goodbreak

Putting the two together we have $n_k f(x_k) = f(n_k x_k) = 0$ but $f(x_k)\ne 0$, which means that $f(x_k)$ has torsion in~$N$ as $n_k\ne 0$. This contradicts our assumption that $N$ is free.%
   \COMMENT{}
   \endproof

Note that Lemma~\ref{Lemma7} has no analogue for the image (rather than the kernel) of a homomorphism $f\colon M\to N$ as above, which need not be a direct summand of~$N$. For example, consider $M = N = \Z$ with $f\colon a\mapsto 3a$.%
   \COMMENT{}
   This little fact lies at the heart of all the differences between integer and real coefficients in our context. More about this after our proof of Theorem~\ref{Thm5}.

\begin{LEM}\label{lem:H1}   Every hypergraph~$(V,E)$ satisfies the following statements with integer coefficients:%
   \COMMENT{}
\begin{enumerate}[\rm(i)]
   \item $H_1 \simeq \Z^{m-d}$, where $m:= |E|$ and $d$ is the rank of~$\Ima\partial_1 ;$\label{H1dim}%
   \COMMENT{}
   \item $C_1/\Ker\partial_1$ is isomorphic to the annihilator of~$\Ker\partial_1$ in~$C^1.$\label{annih1}
   \item The annihilator of~$\Ker\partial_1$ in~$C^1$ equals~$\Ima\delta^0$ if and only if $\Ima\partial_1$ is a direct summand of~$C_0$.\label{annih2}
\end{enumerate}
\end{LEM}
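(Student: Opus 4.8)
The plan is to handle the three parts in turn, exploiting throughout that every module here is free of finite rank and that kernels of maps between such modules split, by Lemma~\ref{Lemma7}.

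For~\eqref{H1dim} I would first apply Lemma~\ref{Lemma7} to $\partialone\colon C_1\to C_0$ to write $C_1=\Ker\partialone\oplus K$ for some submodule~$K$. Projecting onto~$K$ along $\Ker\partialone$ identifies $K\simeq C_1/\Ker\partialone\simeq\Ima\partialone$, a submodule of the free module~$C_0$ and hence itself free, of rank~$d$ by the definition of~$d$. Since $C_1$ is free of rank~$m$ and rank is additive over direct sums, $\Ker\partialone$ is then free of rank $m-d$; as $H_1=\Ker\partialone$, this gives $H_1\simeq\Z^{m-d}$.

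For~\eqref{annih1} I would write~$A$ for the annihilator of~$\Ker\partialone$ in~$C^1$, that is, the kernel of the restriction map $i^*\colon C^1\to{\rm Hom}(\Ker\partialone,\Z)$ dual to the inclusion $i\colon\Ker\partialone\hookrightarrow C_1$. Because $i$ splits (by the decomposition from the previous paragraph), its dual~$i^*$ is a split surjection, so $A=\Ker i^*$ is free of rank $m-\rk(\Ker\partialone)=m-(m-d)=d$. On the other hand $C_1/\Ker\partialone\simeq\Ima\partialone$ is free of rank~$d$ as well. Two free $\Z$-modules of the same finite rank are isomorphic, so $C_1/\Ker\partialone\simeq A$.

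The substance is in~\eqref{annih2}, and here I would factor~$\partialone$ through its image as $\partialone=j\circ p$, where $p\colon C_1\to\Ima\partialone$ is the corestriction and $j\colon\Ima\partialone\hookrightarrow C_0$ the inclusion; then $\deltanought=\partialone^{*}=p^{*}\circ j^{*}$. Since $p$ is a surjection onto a free (hence projective) module, it splits, so $p^{*}$ is injective and its image is exactly the annihilator~$A$ of $\Ker p=\Ker\partialone$. Thus $p^{*}$ is an isomorphism ${\rm Hom}(\Ima\partialone,\Z)\to A$ carrying $\Ima j^{*}$ onto $p^{*}(\Ima j^{*})=\Ima\deltanought$. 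Consequently $\Ima\deltanought=A$ if and only if $\Ima j^{*}={\rm Hom}(\Ima\partialone,\Z)$, i.e.\ if and only if $j^{*}$ is surjective~-- which says precisely that every homomorphism $\Ima\partialone\to\Z$ extends along~$j$ to a homomorphism $C_0\to\Z$. By Lemma~\ref{Lalpha}, applied to the submodule $\Ima\partialone$ of the free module~$C_0$, this extension property is equivalent to $\Ima\partialone$ being a direct summand of~$C_0$, which is the desired conclusion. The one point needing care is the identification $\Ima p^{*}=A$ together with the injectivity of~$p^{*}$: both rest on the splitting of~$p$, so the argument genuinely relies on the freeness of~$\Ima\partialone$ (equivalently on Lemma~\ref{Lemma7}), and this is exactly where the integer theory could otherwise break down.
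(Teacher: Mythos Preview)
Your proof is correct. Part~\eqref{H1dim} is essentially identical to the paper's argument. For parts \eqref{annih1} and~\eqref{annih2} you take a somewhat different, more abstract route than the paper, though the underlying content is the same.

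For~\eqref{annih1}, the paper picks an explicit basis $B\cup C$ of~$C_1$ adapted to the splitting $C_1 = D\oplus\Ker\partialone$ and exhibits $\gammaone'(B)$ as a basis of the annihilator, thereby constructing a concrete (basis-dependent) isomorphism. You instead dualise the split inclusion~$i$ and count ranks. Both are valid; yours is slicker, while the paper's makes the isomorphism tangible and feeds directly into its proof of~\eqref{annih2}.

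For~\eqref{annih2}, the paper argues directly in both directions: given $\psi$ in the annihilator it defines $\varphi'\colon\Ima\partialone\to\Z$ on the basis~$\partialone(B)$ and extends via Lemma~\ref{Lalpha}; conversely it takes a non-extendable $\varphi'$ and pulls it back along~$\partialone$. Your factorisation $\partialone=j\circ p$ with $\deltanought=p^*\circ j^*$ packages the same idea more categorically: the isomorphism $p^*\colon{\rm Hom}(\Ima\partialone,\Z)\to A$ reduces the question to surjectivity of~$j^*$, which is exactly the extension criterion of Lemma~\ref{Lalpha}. One small point: your closing remark that the injectivity of~$p^*$ and the identity $\Ima p^*=A$ ``rest on the splitting of~$p$'' is overcautious. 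Injectivity of~$p^*$ needs only that $p$ is surjective, and $\Ima p^*=A$ is just the universal property of the quotient $C_1/\Ker\partialone\simeq\Ima\partialone$; neither requires the splitting.
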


\proof
   By Lemma~\ref{Lemma7} we have $C_1 = D\oplus \Ker\partial_1$ for some submodule~$D$ of~$C_1$.

\eqref{H1dim} Being submodules of the free $\Z$-module~$C_1$, both $D$ and~$\Ker\partial_1$ are free and thus have ranks.%
   \COMMENT{}
   As $C_1 = D\oplus \Ker\partial_1$, these add up to the rank of~$C_1$, which is~$m$. As $D\simeq C_1/\Ker\partial_1\simeq \Ima\partial_1$, the rank of~$D$ is~$d$, and the result follows.%
   \COMMENT{}

\eqref{annih1} Pick a basis $B\cup C$ of~$C_1$ such that $B$ is a basis of~$D$ and $C$ is one of~$\Ker\partial_1$. An element of~$C^1$ sends $\Ker\partial_1$ to zero if and only if it sends $C$ to zero, regardless of where it maps~$B$. Let $\gammaone'$ denote the isomorphism $C_1\to C^1$ which extends linearly from $x\mapsto\gammaone'(x)\in C^1$ with $\gammaone'(x)(y) = \delta_{xy}$ for all $x,y\in B\cup C$.%
   \COMMENT{}
   The annihilator of~$\Ker\partial_1$ then has $\gammaone'(B)$ as a basis,%
   \COMMENT{}
   so it is isomorphic to~$D\simeq C_1/\Ker\partial_1$.%
   \COMMENT{}

\eqref{annih2} Suppose first that $\Ima\partial_1$ is a direct summand of~$C_0$. The annihilator $A$ of~$\Ker\partial_1$ in~$C^1$ includes~$\Ima\delta^0$ by definition of~$\delta$. Its basis~$\gamma'(B)$ from the proof of~\eqref{annih1} maps bijectively%
   \COMMENT{}
   to the basis $\partialone(B)$ of~$\Ima\partial_1$, by the isomorphism theorem for~$\partialone$.%
   \COMMENT{}
   Given~$\psi\in A$, define $\varphi'\colon\Ima\partial_1\to\Z$ via this bijection, i.e.\ by sending $\partialone b$ to $\psi(b)$ for every $b\in B$. By Lemma~\ref{Lalpha}, $\varphi'$~extends to some $\varphi\in C^0$. Then $\psi$ agrees on~$B\cup C$,%
   \COMMENT{}
   and hence on~$C_1$, with $\varphi\circ\partial_1 = \delta(\varphi)$. Thus, $\psi = \delta(\varphi)$, showing $\psi\in\Ima\delta^0$ as desired.

Suppose now that $\Ima\partial_1$ is not a direct summand of~$C_0$. By Lemma~\ref{Lalpha} there exists a homomorphism $\varphi'\colon\Ima\partial_1\to\Z$ that does not extend to any $\varphi\in C^0$. Then $\psi := \varphi'\circ\partialone$ is a 1-cochain in~$A\sm\Ima\delta^0$: since $\partialone(C_1)$ is the entire domain of~$\varphi'$, any $\varphi\in C^0$ satisfying $\psi = \delta(\varphi)$, and hence $\psi = \varphi\circ\partialone$ by definition of~$\delta$, would be an extension of~$\varphi'$.%
   \COMMENT{}
   \endproof

Note that the isomorphism in Lemma~\ref{lem:H1}\,\eqref{annih1} between $C_1/\Ker\partial_1$ and the annihilator of~$\C = \Ker\partial_1$ need not be canonical,%
   \COMMENT{}
   in that we may not be able to choose the isomorphism~$\gammaone'$ used in the proof as~$\gammaone$. At the end of this paper, after Theorem~\ref{cor:sp.tree summand}, we shall see an example where $\Ima\delta^0$ is the entire annihilator~$A$%
   \COMMENT{}
   of~$\Ker\partial_1$ in~$C^1$, so that $\B := \gammaone^{-1}(\Ima\delta^0)$ satisfies $\B = \gammaone^{-1}(A) = \C^\bot$,%
   \COMMENT{}
   but where $\B\oplus\C$ is a proper submodule of~$C_1$ while $\B' := (\gamma')^{-1}(A) = \big((\gamma')^{-1}\circ\gammaone\big) (\B)$%
   \COMMENT{}
   satisfies $\B'\oplus\C = D\oplus\Ker\partial_1 = C_1$.

Another interesting aspect of Lemma~\ref{lem:H1} is that if $\Ima\partial_1$ is not a direct summand of~$C_0$~-- and we shall see a simple example of this in a moment~-- then $C_1/\Ker\partial_1$ is isomorphic both to the annihilator of~$\Ker\partial_1$,%
   \COMMENT{}
   which contains~$\Ima\delta^0$ properly,%
   \COMMENT{}
   and to the submodule~$\D$ of~$\Ima\delta^0$ as in Lemma~\ref{D}. Thus, the annihilator of~$\Ker\partial_1$ is isomorphic to the proper submodule~$\D$ of itself.%
  \COMMENT{}
   Since the annihilator of~$\Ker\partial_1$, denoted as~$A = \gammaone'(D)$ in the proof of Lemma~\ref{lem:H1}, is free,%
   \COMMENT{}
   $\D$~is not a direct summand of it, by Lemmas \ref{ETS} and~\ref{Lalpha}.%
   \COMMENT{}

\looseproof{Proof of Theorem~\ref{Thm5}}
   The equivalence of our assertions \eqref{A=Im} and~\eqref{Imd} is Lemma~\ref{lem:H1}\,\eqref{annih2}. Assertion~\eqref{B=Cbot} is just the translation of~\eqref{A=Im} via~$\gamma_1$.%
   \COMMENT{}

Let us show that~\eqref{A=Im} is equivalent to~\eqref{HomH1}. 
   \COMMENT{}
   $\!\!$The homomorphism ${\psi\mapsto\psi\restricts\Ker\partial_1}$ from $C^1$ to~${\rm Hom}(\Ker\partial_1,\Z) = {\rm Hom}(H_1,\Z)$ is surjective, because every homomorphism $\Ker\partial_1\!\to\Z$ extends to one in~$C^1$ by Lemmas \ref{Lemma7} and~\ref{Lalpha}. By the isomorphism theorem it thus defines an isomorphism from $C^1/A$ to~${\rm Hom}(H_1,\Z)$, where $A$ is the annihilator of~$\Ker\partial_1$ in~$C^1$ and hence its kernel. If $A = \Ima\delta^0$, which is assertion~\eqref{A=Im}, we thus have an isomorphism from $H^1 = C^1/\Ima\delta^0$ to ${\rm Hom}(H_1,\Z)$ as claimed in~\eqref{HomH1}. If \eqref{A=Im} fails, then $\Ima\delta^0$ is properly contained in~$A$. Our map $\psi\mapsto \psi\restricts\Ker\partial_1$ is still well defined on the classes in~$H^1\!$,%
   \COMMENT{}
   and thus defines an epimorphism from $H^1$ to~${\rm Hom}(H_1,\Z)$. But this is not injective, so \eqref{HomH1} fails too.

Let us now prove~\eqref{H1=H1}, assuming~\eqref{Imd}. In order to show that $H_1$ and~$H^1$ are canonically isomorphic, consider again the dual sequences shown in Figure~\ref{LAexact}. While the top sequence is still exact, its dual sequence below it need not be exact,\penalty-200\ prima facie, now that we have integer coefficients. However we shall prove that in fact it is,%
   \COMMENT{}
   so that
 $$H_1 = \Ker\partial_1\simeq (\Ker\partial_1)^* = \Ima i^*\simeq C^1/\Ker i^* = C^1/\Ima\delta^0 = H^1$$
 with the required isomorphism ${\pi\colon C^1\!/\Ker i^*\!\to\Ima i^*\!}$.%
   \COMMENT{}
   The only nontrivial assertions here%
   \COMMENT{}
   are the second and the penultimate `=', so let us address these in turn.

$\!$For the second~`=' we have to show that every homomorphism ${\psi'\!\colon\!\Ker\partial_1\to\Z}$ has the form~$i^*(\psi) = \psi\circ i$ for some~$\psi\in C^1$, i.e, extends to a homomomorphism~$\psi$ defined on all of~$C_1$. This holds by Lemmas~\ref{Lalpha} and~\ref{Lemma7}.%
   \COMMENT{}

For the penultimate~`=' we have to show that $\Ima\delta^0 = \Ker i^*$. So let us again examine the homomorphism~$i^*\colon C^1\to (\Ker\partial_1)^*$. It sends a given $\psi\in C^1$ to the homomorphism $i^*(\psi)\colon \Ker\partial_1 \to\Z$ which maps $x\in\Ker\partial_1$ to $\psi(i(x)) = \psi(x)$.%
   \COMMENT{}
   Thus, $i^*(\psi) = \psi\restricts\Ker\partial_1$, and so $\Ker i^*$ consists of those $\psi\in C^1$ that send~$\Ker\partial_1$ to~$0\in\Z$. Clearly all $\psi\in\Ima\delta^0$ do that, because $\delta^0$ is defined as the dual of~$\partial_1$. Thus trivially $\Ima\delta^0\sub\Ker i^*$, and it remains to show the converse inclusion.

We have to show that every homomorphism $\psi\in C^1$ that sends $\Ker\partial_1$ to zero has the form $\psi = \delta^0(\varphi)$ for some $\varphi\in C^0$. In order to find such~$\varphi$ note that, since $\psi(\Ker\partial_1) = 0$, these~$\psi$ are well defined on~$C_1/\Ker\partial_1$; let $\tilde\psi\colon {C_1/\Ker\partial_1\to\Z}$ be the induced map, which sends $[x]\in C_1/\Ker\partial_1$ to~$\psi(x)\in\Z$. Define $\varphi'\colon{\Ima\partial_1\to\Z}$ as $\varphi':=\tilde\psi\circ\pi^{-1}$, where $\pi$ is the isomorphism $C_1/\Ker\partial_1\to\Ima\partial_1$ that sends $[x]$ to~$\partial_1 x$. By Lemma~\ref{Lalpha} and our assumption~\eqref{Imd} that $\Ima\partial_1$ is a direct summand of~$C_0$, our~$\varphi'$ extends to a homomorphism~$\varphi\in C^0$. Then $\psi = \delta^0(\varphi)$ as desired, since for all $x\in C_1$ we have $\psi(x) = \tilde\psi([x]) = \varphi'(\partial_1 x) = \varphi(\partial_1 x)$.

To complete our proof we show that \eqref{H1=H1} implies~\eqref{A=Im}. As we noted earlier, the annihilator of~$\Ker\partial_1$ in~$C^1$ is~$\Ker i^*$. If \eqref{A=Im} fails, then this is not~$\Ima\delta^0$. Then $C^1/\Ker i^*\ne C^1/\Ima\delta^0$, so $H_1$ and~$H^1$ are not canonically isomorphic.
  \endproof

It can happen that $H_1$ and~$H^1$ are not isomorphic at all, not just not canonically. Let us show this now.

\goodbreak

\begin{EX}\label{MainExample}
	The hypergraph $(V,E)$ with $E = \{e_1,e_2,e_3\}$ and ${V \!= \{v_1,v_2,v_3\}}$ where $e_i = (\{v_j,v_k\},\{v_i\})$ whenever $\{i,j,k\} = \{1,2,3\}$ satisfies the following:
   %\vskip-\medskipamount\vskip-0pt
   \begin{enumerate}[\rm (i)]\itemsep=0pt
     \item The homomorphism $\partial_1$ is injective.\label{partialinj}
     \item The image of $\partial_1$ is not a direct summand of~$C_0$.\label{Impartial}
     \item The homomorphism $\delta^0$ is injective.\label{deltainj}
     \item The image of $\delta^0$ is not a direct summand of~$C^1$.\label{Imdelta}
     \item The annihilator of $\Ker\partial_1$ in~$C^1$ contains $\Ima\delta^0$ properly.\label{IminA}
     \item The module $\D = \Ima (\deltanought\circ\gamma\circ\partial_1)$ is properly contained in $\Ima\delta^0$.\label{DinIm}
     \item $H_1\not\simeq H^1$.\label{H1H1}
   \end{enumerate}
   %\vskip-\medskipamount\vskip-0pt
 Moreover, this hypergraph has no algebraic spanning tree over the integers.%
   \COMMENT{}
\end{EX}

\proof (Sketch)
   (\ref{partialinj}) Direct inspection shows that $\partial E = (\partial e_1, \partial e_2, \partial e_3)$ is linearly independent%
   \COMMENT{}
   and $\Ker\partial_1 = \{0\}$.%
   \COMMENT{}

(\ref{Impartial}) As both $C_0$ and~$C_1$ have rank~3,%
   \COMMENT{}
   assertion~\eqref{partialinj} implies by Lemma~\ref{ETS} that either $\Ima\partial_1 = C_0$ or $\Ima\partial_1$ has non-unit elementary divisors as a submodule of~$C_0$.%
   \COMMENT{}
   But $\Ima\partial_1\ne C_0$: for example, it is easy to show that $\partial E$ does not generate the singleton chains~$v_i\in C_0$.%
   \COMMENT{}
   Hence $\Ima\partial_1$ has non-unit elementary divisors, which implies~(\ref{Impartial}) by Lemma~\ref{Lalpha}.

(\ref{deltainj}) This is again easy to check by direct inspection.%
   \COMMENT{}

\eqref{Imdelta} The proof of this is analogous to the proof of~\eqref{Impartial}; use assertion~\eqref{IminA}, to be proved independently below, for the required fact that $\Ima\delta^0\ne C^1$.%
   \COMMENT{}

(\ref{IminA}) follows from~(\ref{Impartial}) by Lemma~\ref{lem:H1}\,\eqref{annih2}.

(\ref{DinIm}) By \eqref{Impartial} we have $\Ima\partialone\subsetneq C_0$, so there exists $\varphi\in C^0\sm \gamma(\Ima\partialone)$. By~\eqref{deltainj}, $\deltanought(\varphi)\in\Ima\delta^0\sm\D$.

(\ref{H1H1}) We have $H_1 = \Ker\partial_1 = \{0\}$ by~\eqref{partialinj}%
   \COMMENT{}
   but $H^1 = C^1/\Ima\delta^0 \ne \{0\}$ by~(\ref{IminA}).%
   \COMMENT{}

\goodbreak

Let us finally show that our hypergraph has no spanning tree over the integers. As $\C = \Ker\partial_1 = \{0\}$ must, for any spanning tree $T\sub E$, have a basis indexed by~$E\sm T$, we would have $E\sm T = \es$ and thus $T=E$. Since, by the first spanning tree axiom, the homomorphisms~$\gamma(x_t)$ for the associated basis $(\,x_t\mid t\in T\,)$ of~$\B$ send $t'\in T$ to~$\delta_{tt'}$, the fact that $T=E$ implies for all $t\in T$ that $\gamma(x_t)(e) = \delta_{te} = \gamma(t)(e)$ for all $e\in E$, so $\gamma(x_t) = \gamma(t)$ and hence $x_t = t$ for all $t\in T=E$. So $(\,x_t\mid t\in T\,) = (e_1, e_2, e_3)$ must be a basis of~$\B = \gamma^{-1} (\Ima\delta^0)$, which makes $\gamma(E)$ a basis of~$\Ima\delta^0$. But it is not: $\gamma(E)$~is a basis of~$C^1$, which contains~$\Ima\delta^0$ properly by~(\ref{IminA}).%
   \COMMENT{}
   \endproof

\section{Direct summands over the integers}\label{sec:DirectSummands}

In view of Theorem~\ref{Thm5}, the central remaining problem from Section~\ref{sec:H1} is to investigate which hypergraphs are algebraically graph-like. It would be interesting to see any structural conditions on hypergraphs that imply graph-likeness.

This is part~(i) of the following problem. Is its part~(ii) related to~(i)?

\begin{PROB}\label{images}
Which hypergraphs over the integers satisfy
 \begin{enumerate}[\rm(i)]\itemsep=0pt\vskip-\smallskipamount\vskip0pt
 	\item $\Ima\partial_1$ is a direct summand of~$C_0\,?$\label{imagespartial}%
      \COMMENT{}
    \item $\Ima\delta^0$ is a direct summand of~$C^1\,?$\label{imagesdelta}%
    \COMMENT{}    
 \end{enumerate}
\end{PROB}

\goodbreak

One structural condition implying this is that our hypergraph is  a graph:%
   \COMMENT{}

\begin{THM}\label{Thm11}
	If $(V,E)$ is a graph then $\Ima\partial_1$\,and $\Ima\delta^0\!$, taken over the integers, are direct summands of~$C_0$ and~$C^1\!$, respectively.
\end{THM}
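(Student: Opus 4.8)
The plan is to prove the two assertions in sequence, with the first one, that $\Ima\partialone$ is a direct summand of~$C_0$, carrying all the genuine graph-theoretic content, and the second one, about~$\Ima\deltanought$, following almost formally from the first together with Theorem~\ref{Thm5} and the construction in the proof of Lemma~\ref{lem:H1}.

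For the first assertion I would use the fact that a submodule of a free $\Z$-module whose quotient is torsion-free (equivalently free) is a direct summand; this is immediate from Lemmas~\ref{ETS} and~\ref{Lalpha}, since the torsion part of the quotient is $\bigoplus\Z/n_i$ and vanishes exactly when all elementary divisors~$n_i$ are units. So it suffices to present $\Ima\partialone$ as the kernel of a homomorphism from~$C_0$ onto a free module. Let $K_1,\dots,K_c$ be the connected components of the graph and let $\Phi\colon C_0\to\Z^c$ send $\sum_v n_v v$ to its tuple of component sums $\big(\sum_{v\in K_i} n_v\big)_{i\le c}$; this is a surjection onto the free module~$\Z^c$. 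The inclusion $\Ima\partialone\sub\Ker\Phi$ is easy: every generator $\partialone e = w-u$ has its endpoints $u,w$ in one component, so $\Phi(\partialone e)=0$. The converse inclusion $\Ker\Phi\sub\Ima\partialone$ is the main obstacle and the only place where the graph structure is really needed: I would fix a spanning tree of each component and argue component by component that every $0$-chain with vanishing coefficient sum is an integer combination of edge boundaries. Rooting each tree and cancelling coefficients from the leaves inward, one vertex at a time, using the boundary of the edge to its parent, does this; it is the standard fact that $H_0$ of a graph is free of rank~$c$. Once $\Ima\partialone=\Ker\Phi$ is established, $C_0/\Ima\partialone\simeq\Ima\Phi=\Z^c$ is free, so the projection splits and $\Ima\partialone$ is a direct summand of~$C_0$. (An alternative, more explicitly graph-theoretic route would exhibit $\partialone T$ for a spanning forest~$T$ as part of a $\Z$-basis of~$C_0$ via the triangular, hence unimodular, structure of a rooted incidence matrix, noting that $\partialone T$ generates $\Ima\partialone$ through fundamental-cycle relations.)

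For the second assertion I would not revisit the graph structure at all. First observe that the annihilator~$A$ of~$\Ker\partialone$ in~$C^1$ is a direct summand of~$C^1$ for \emph{every} hypergraph: writing $C_1=D\oplus\Ker\partialone$ as in Lemma~\ref{Lemma7} and choosing a basis $B\cup C$ adapted to this decomposition, the proof of Lemma~\ref{lem:H1}\,\eqref{annih1} shows that $A$ has basis $\gammaone'(B)$, which is part of the basis $\gammaone'(B\cup C)$ of~$C^1$; hence $A$ is spanned by part of a basis of~$C^1$, and is a direct summand with complement spanned by~$\gammaone'(C)$. It then remains only to identify~$A$ with~$\Ima\deltanought$, and this is exactly the equivalence of assertions~\eqref{Imd} and~\eqref{A=Im} in Theorem~\ref{Thm5}: the first assertion, just proved, is precisely~\eqref{Imd}, so~\eqref{A=Im} holds, giving $A=\Ima\deltanought$. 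Therefore $\Ima\deltanought=A$ is a direct summand of~$C^1$, which completes the proof. In summary, the substantive step is the connectivity argument $\Ker\Phi\sub\Ima\partialone$ for the first assertion; the statement about~$\deltanought$ then reduces to bookkeeping through the dual and the machinery already in place.
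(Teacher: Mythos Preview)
Your proposal is correct, and both halves take a genuinely different route from the paper's proof.

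For the first assertion the paper works constructively: it fixes a spanning tree~$T$ with root~$r$, shows that $(\,[t]\mid t\in T\,)$ is a basis of~$C_1/\Ker\partialone$ (using fundamental cycles to reduce an arbitrary class to one supported on~$T$, and a leaf argument for independence), and then extends the resulting basis $\partialone T$ of~$\Ima\partialone$ to a basis $(r\mid\partialone T)$ of~$C_0$ by an explicit leaf-stripping induction. Your augmentation map~$\Phi$ and the torsion-free-quotient criterion give the same conclusion more conceptually, and handle disconnected graphs without extra words (the paper's proof, as written, assumes a single spanning tree). What the paper's route buys is an explicit basis of~$C_0$ adapted to~$\Ima\partialone$.

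For the second assertion the contrast is sharper. The paper again builds explicit bases: it shows that the fundamental-cut cochains $\beta_t=\deltanought(\varphi_t)$ generate~$\Ima\deltanought$, and that together with the~$\psi_e$ for chords~$e$ they form a basis of~$C^1$. Your argument---that the annihilator~$A$ of~$\Ker\partialone$ is \emph{always} a direct summand of~$C^1$ (from the $\gammaone'$-basis in the proof of Lemma~\ref{lem:H1}\,\eqref{annih1}), combined with Theorem~\ref{Thm5}\,\eqref{A=Im}$\Leftrightarrow$\eqref{Imd} to identify~$A$ with~$\Ima\deltanought$---is shorter and makes transparent that this half carries no graph-theoretic content of its own once the first half is in place. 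The paper's explicit construction, on the other hand, isolates precisely which spanning-tree axiom is used (only the first), which is what yields Theorem~\ref{cor:sp.tree summand} as a corollary; your route recovers that corollary too, but indirectly via Proposition~\ref{1.9.5}\,\eqref{195B} and Theorem~\ref{Thm5}.
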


\proof
   Let $T\sub E$ be (the oriented edge set of) a~spanning tree, with root~$r\in V\!$ say. We first prove that $\Ima\partial_1$ is a direct summand of~$C_0$, by showing that $\partialone T = (\,\partial t\mid t\in T\,)$ is a basis of~$\Ima\partial_1$ and the extended family $(r|\partial T)$ is a basis of~$C_0$.

Since $[t]\mapsto\partial t$ is a (well-defined) isomomorphism from $C_1/\Ker\partial_1$ to~$\Ima\partial_1$, showing that $\partialone T$ is a basis of~$\Ima\partial_1$ is equivalent to showing that $(\,[t]\mid t\in T\,)$ is a basis of~$C_1/\Ker\partial_1$. Let us do this first.

For a proof that $(\,[t]\mid t\in T\,)$ is linearly independent, note first that $\sum_{t\in T} n_t [t] = 0\in C_1/\Ker\partial_1$ if and only if $x:= \sum_{t\in T} n_t t\in\Ker\partial_1$. So let us show that this holds only when $n_t = 0$ for all~$t\in T$. Suppose the contrary, that
 $$T':= \{\,t\in T\mid n_t\ne 0\,\}\ne\es\,.$$
 Let $v$ be a leaf of (the forest induced by the edges~in)~$T'$. Then $v$ is incident with exactly one edge~$t'$ in~$T'$, so
 $$\textstyle\partialone x = \partialone\sum_{t\in T'} n_t t = \sum_{t\in T'} n_t \partialone t\in C_0$$
 has coefficient $\pm n_{t'}\ne 0$%
   \COMMENT{}
   at~$v$. Thus, $\partialone x\ne 0\in C_0$ and hence $x\notin \Ker\partial_1$, as desired.

Next, let us show that $(\,[t]\mid t\in T\,)$ generates~$C_1/\Ker\partial_1$. Let ${[x]\in C_1/\Ker\partial_1}$ be given, with $x = \sum_{e\in E} n_e e$ say. Let 
 $$\textstyle x_0 := \sum_{e\in E\sm T} n_e x_e\,,$$%
   \COMMENT{}
   where $x_e\in \Ker\partial_1$ is the (oriented) fundamental cycle of~$e$ with respect to~$T$. As $x_e(e') = \delta_{ee'}$ for all $e,e'\in E\sm T$, our 1-chains $x$ and~$x_0$ have the same coefficients~$n_e$ on~$E\sm T$. These coefficients vanish for~$x-x_0$, so 
 $$\textstyle x_1 := x-x_0 = \sum_{t\in T} m_t t$$
 for suitable coefficients~$m_t$. As $x-x_1 = x_0\in\Ker\partial_1$, the chains $x$ and~$x_1$ re\-pre\-sent the same class in~$C_1/\Ker\partial_1$. Thus, $[x] = [x_1] = \sum_{t\in T} m_t [t]$ as desired.

We have shown that $\partialone T$ is a basis of~$\Ima\partial_1$. To show that $(r|\partial T)$ generates~$C_0$, let $y = \sum_{v\in V} n_v v\in C_0$ be given. Choose coefficients~$\ell_t$ for all $t\in T$ inductively, as follows. Let $T = T_0,\dots,T_k = \{r\}$ be such that $T_{i+1}$ is obtained from~$T_i$ by deleting all its (edges incident with) leaves: vertices of degree~1 other than the root~$r$. For $i=0,\dots,k-1$ in turn we can now choose coefficients $\ell_t$ for $t\in T_i\sm T_{i+1}$ so that $\sum_{t\in T} \ell_t\partial t$ has the same coefficients as~$y$ at all $v$ except possibly at~$r$.%
   \COMMENT{}
   This can be adjusted by adding $\ell_r r$ for some suitable~$\ell_r$, so that
 $$\textstyle \ell_r r + \sum_{t\in T} \ell_t\partial t = y$$
 as desired.

We have shown that $(r|\partial T)$ generates~$C_0$; let us show that it is linearly independent.%
   \footnote{This also follows by a known algebraic property of free $\Z$-modules of finite rank, which is that they cannot have generating sets smaller than their rank. As $(r|\partial T)$ has the same size as~$V\!$, which is a basis of~$C_0$, it therefore cannot contain a smaller generating set, which it would if it was not linearly independent.}
   Following their inductive definition, it is easy to see that the coefficients~$\ell_t$ and~$\ell_r$ of the generators are unique, given~$y\in C_0$. In particular, the only way to generate $0\in C_0$ is by choosing $\ell_r = \ell_t = 0$ for all~$t\in T$. Thus, $(r|\partial T)$ is linearly independent. This completes our proof that $\Ima\partial_1$ is a direct summand of~$C_0$.

\goodbreak

Let us now show that $\Ima\delta^0$ is a direct summand of~$C^1$. Every $t\in T$ separates the tree $T$ into two components $T_t^-$ and~$T_t^+$ of $T-t$, with $t$ oriented from~$T_t^-$ to~$T_t^+$. Let $\varphi_t\in C^0$ map $V(T_t^-)$ to~$0$ and $V(T_t^+)$ to~$1$, and put $\beta_t := \delta^0(\varphi_t)$. (This is the 1-cochain corresponding%
   \COMMENT{}
   to the fundamental cut of~$t$ in~$C^1$.) Our first aim is to  prove that these~$\beta_t$ generate~$\Ima\partial^0$.

To show this, let~$\psi = \delta^0(\varphi)$ be given. Let $\psi':=\sum_{t\in T} n_t\beta_t$, where $n_t = \varphi(v) - \varphi(u)$ for $t=uv$.%
   \COMMENT{}
   Note that $\psi'$ agrees with~$\psi$ on~$T$, because $\beta_t(t') = \delta_{tt'}$ and $n_t = \psi(t)$ for each~$t$; let us show that they agree on $E\sm T$ too.

So let any chord $e = uv\in E\sm T$ be given. Let $v_0,\dots,v_k\in V\!$ be the vertices on the path in~$T$ from $u=v_0$ to~$v=v_k$, and write $t_i$ for the edge~$v_{i-1} v_i$ on that path, $i=1,\dots,k$. Then
 $$\textstyle\psi(e) = \varphi(v) - \varphi(u) = \sum_{i=1}^k(\varphi(v_i) - \varphi(v_{i-1})) = \sum_{i=1}^k n_{t_i} \beta_{t_i} (t_i) = \psi'(e);$$%
   \COMMENT{}
 for the last equality note that $\beta_t(e)=1$ for $t = t_1,\dots,t_k$ but $\beta_t(e) = 0$ for all other $t\in T$.%
   \footnote{This is the oriented version of the well-known property of unoriented graphs that a chord lies in the fundamental cuts of precisely those tree edges that lie on its fundamental cycle.}
   This completes our proof that $(\,\beta_t\mid t\in T\,)$ generates~$\Ima\delta^0$.

Next, let us show that our $\beta_t$ for $t\in T$ and the maps $\psi_e = \gammaone(e)$ for~$e\in E\sm T$%
   \COMMENT{}
   together generate~$C^1$. Let $\psi = C^1$ be given. Since $(\,\psi_e\mid e\in E\,)$ is a basis of~$C^1$ (the dual of the basis~$E$ of~$C_1$), we can express~$\psi$ as
 $$\textstyle\psi = \sum_{e\in E} n_e \psi_e$$
 with suitable coefficients~$n_e$. Let us use some of these to define
 $$\textstyle\psi' := \sum_{t\in T} n_t \beta_t\,.$$
 Once more, $\psi'$ and~$\psi$ agree on~$T$, because $\beta_{t'}(t) = \delta_{tt'}$ and $\psi_e(t) = \delta_{et}$. On~$E\sm T$ our two maps need not agree; let $\ell_e := \psi(e) - \psi'(e)$ for chords $e\in E\sm T$. Then 
 $$\textstyle\psi'' := \psi - \psi' = \sum_{e\in E\sm T} \ell_e\psi_e\,,$$
 \vskip-\smallskipamount\noindent
 so
 $$\textstyle\psi = \psi'+\psi'' = \sum_{t\in T} n_t\beta_t + \sum_{e\in E\sm T}\ell_e\psi_e$$
 as desired.%
   \COMMENT{}
   
We have shown that the $\beta_t$ for $t\in T$ and the $\psi_e$ for~$e\in E\sm T$ together generate~$C^1$. In fact, the coefficients $n_t$ and~$\ell_e$ above are easily seen to be unique: the~$n_t$ are, because $\beta_{t'}(t) = 0 = \psi_e(t)$ for all $t'\ne t$ and all~$e$,%
   \COMMENT{}
   and once the $n_t$ are determined so are the~$\ell_e$.%
   \COMMENT{}
   In particular, the only way to generate $0\in C^1$ is with $n_t = 0 = \ell_e$ for all $t$ and~$e$, so our $\beta_t$ and~$\psi_e$ in fact form a basis of~$C^1$.%
   \footnote{This also follows, once more, from the fact that finitely generated free $\Z$-modules have a well-defined rank.%
   \COMMENT{}
   But this is a nontrivial property that seems unnecessary to invoke here.}%
   \COMMENT{}

Being part of this basis, $(\,\beta_t\mid t\in T\,)$ is linearly independent. We showed earlier that it generates~$\Ima\delta^0$, so it is a basis of~$\Ima\delta^0$. As this basis is part of our basis $(\,\beta_t\mid t\in T\,)\cup (\,\psi_e\mid e\in E\sm T\,)$ of~$C^1$, we have established our claim that $\Ima\delta^0$ is a direct summand of~$C^1$.
   \endproof

The reader will have noticed that our proof of the second assertion of Theorem~\ref{Thm11}, that $\Ima\delta^0$ is a direct summand of~$C^1$, used graphs only in as much they satisfy our first spanning tree axiom from Section~\ref{sec:SpTrees}: that the cochains~$\beta_t$ corresponding to the fundamental cuts of~$T$ form a basis of~$\Ima\delta^0$ satisfying ${\beta_t(t') = \delta_{tt'}}$. (This contrasts with the first part of our proof, in which our extension of $\partial T$ to a basis of~$C_0$ by `adding a vertex' does seem to use graphs in an essential way.)%
   \COMMENT{}

We thus have, as a corollary of the proof of Theorem~\ref{Thm11}, a positive solution to Problem~\ref{images}\,\eqref{imagesdelta} for hypergraphs with spanning trees:

\begin{THM}\label{cor:sp.tree summand}
For every hypergraph over the integers that has an algebraic spanning tree, $\Ima\delta^0$ is a direct summand of~$C^1$.\noproof
\end{THM}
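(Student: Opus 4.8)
The plan is to extract from the first spanning tree axiom exactly the ingredients that the second half of the proof of Theorem~\ref{Thm11} used, and then repeat that argument verbatim in the abstract setting. Let $T\sub E$ be an algebraic spanning tree over the integers, with associated basis $(\,x_t\mid t\in T\,)$ of~$\B=\gamma_1^{-1}(\Ima\delta^0)$ satisfying $\langle x_t,t'\rangle=\delta_{tt'}$ for all $t'\in T$. Put $\beta_t:=\gamma_1(x_t)$. Since $\gamma_1$ is an isomorphism carrying~$\B$ onto~$\Ima\delta^0$, the family $(\,\beta_t\mid t\in T\,)$ is a basis of~$\Ima\delta^0$, and $\beta_t(t')=\gamma_1(x_t)(t')=\langle x_t,t'\rangle=\delta_{tt'}$. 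These $\beta_t$ thus play exactly the role of the fundamental-cut cochains in the proof of Theorem~\ref{Thm11}.

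First I would show that the $\beta_t$ for $t\in T$ together with the standard basis cochains $\psi_e=\gamma_1(e)$ for the chords $e\in E\sm T$ form a basis of~$C^1$. For generation, given $\psi\in C^1$ expand it over the standard basis $(\,\psi_e\mid e\in E\,)$ of~$C^1$ as $\psi=\sum_{e\in E}n_e\psi_e$, and set $\psi':=\sum_{t\in T}n_t\beta_t$. Since $\beta_t(t')=\delta_{tt'}$ and $\psi_e(t')=\delta_{et'}$, both $\psi$ and~$\psi'$ take the value~$n_{t'}$ on every $t'\in T$, so $\psi-\psi'$ vanishes on~$T$ and hence is a combination $\sum_{e\in E\sm T}\ell_e\psi_e$ of chord cochains only. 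Thus $\psi=\sum_{t\in T}n_t\beta_t+\sum_{e\in E\sm T}\ell_e\psi_e$, as required. For linear independence, suppose $\sum_{t\in T}n_t\beta_t+\sum_{e\in E\sm T}\ell_e\psi_e=0$; evaluating at any $t'\in T$ gives $n_{t'}=0$, because $\beta_t(t')=\delta_{tt'}$ while $\psi_e(t')=0$ for $e\notin T$. What remains, $\sum_{e\in E\sm T}\ell_e\psi_e=0$, then forces all $\ell_e=0$ since these $\psi_e$ are part of the standard basis of~$C^1$.

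Having established that $(\,\beta_t\mid t\in T\,)\cup(\,\psi_e\mid e\in E\sm T\,)$ is a basis of~$C^1$ whose sub-family $(\,\beta_t\mid t\in T\,)$ is a basis of~$\Ima\delta^0$, I would conclude immediately that $\Ima\delta^0$ is a direct summand of~$C^1$: it is spanned by part of a basis of~$C^1$, and the span of the complementary chord cochains is the required complement. I expect no genuine obstacle here; the only point needing care is the verification at the very start that the first spanning tree axiom really furnishes a basis $(\,\beta_t\mid t\in T\,)$ of~$\Ima\delta^0$ with $\beta_t(t')=\delta_{tt'}$~-- this is precisely where the graph-specific construction of fundamental cuts in Theorem~\ref{Thm11} gets replaced by the axiom, and once this translation is in hand the remaining argument is the same bookkeeping as before.
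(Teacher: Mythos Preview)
Your proposal is correct and is precisely the argument the paper intends: it states this theorem as a corollary of the proof of Theorem~\ref{Thm11}, noting that the second half of that proof used graphs only through the first spanning tree axiom, i.e.\ that the $\beta_t$ form a basis of~$\Ima\delta^0$ with $\beta_t(t')=\delta_{tt'}$. Your translation of the axiom into this statement via $\beta_t=\gamma_1(x_t)$ and the subsequent basis verification match the paper's reasoning exactly.
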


In view of Theorem~\ref{Thm11} it is instructive to revisit the example, mentioned briefly before Problem~\ref{BisCbot}, of the two-vertex graph with two parallel edges, $e$~and~$t$ say. Let us once more use the notation of $\C = \Ker\partial_1$ and~$\B = \gamma^{-1}(\Ima\delta^0)$. By Proposition~\ref{1.9.5} we have both $\C^\bot = \B$  and~$\B^\bot = \C$. But it is easy to see that $\C$ and~$\B$ do not, together, generate all of~$\E = C_1$; for example, they do not generate the edge~$t$.%
   \COMMENT{}

On the other hand, it is also easy to show that $\C\cap\B = \{0\}$.%
   \COMMENT{}
   This implies that $\C$ and~$\B$ form a direct sum: every element of $\C+\B$ has a unique representation as $c+b$ with $c\in\C$ and $b\in\B$.%
   \COMMENT{}
   But this direct sum $\C\oplus\B$ is properly contained in~$\E$.

Nonetheless, $\C$~is a direct summand of~$\E$ by Lemma~\ref{Lemma7},
and $\B$ is a direct summand of~$\E$ by Theorem~\ref{Thm11}. However the direct complement of~$\C$ in~$\E$ is not~$\B$, and the direct complement of $\B$ in~$\E$ is not~$\C$.

All this plays out explicitly as follows. We have $\C = \{\,n(e-t)\mid n\in\Z\,\}$, which is a direct summand of~$\E$ since $(e-t,t)$ is a basis of~$\E$. Similarly we have $\Ima\delta^0 = \{\,n(\psi_e + \psi_t)\mid n\in\Z\,\}$,%
   \COMMENT{}
   which is a direct summand of~$C^1$ since $(\psi_e+\psi_t,\psi_t)$ is a basis of~$C^1$.%
   \COMMENT{}
   The corresponding%
   \COMMENT{}
   direct summand of~$\E=C_1$ is $\B = \{\,n(e+t)\mid n\in\Z\,\}$,%
   \COMMENT{}
   the span of the first element of the basis $(e+t,t)$ of~$\E$.

We have thus shown that $\C\oplus t\Z = \E = \B\oplus t\Z$. So $t\Z$ is a direct summand of~$\E$ that individually `complements' each of $\C$ and~$\B$, which are orthogonal complements of each other in $\C\oplus\B\subsetneq\E$.%
   \COMMENT{}%
   \COMMENT{}

This little example also throws a light on the proof of Theorem~\ref{Thm11}, by showing why we had to use such a mix of generators rather than, for example, just fundamental cycles and cuts. Indeed, we could not have used $[x_t]$ instead of~$[t]$ in the first part (where $x_t = \gamma^{-1}(\beta_t)$ is the fundamental cut of~$t$ with respect to~$T$), and we could not have used $\gamma(x_e)$ instead of~$\psi_e$ in the second part (where $x_e$ is the fundamental cycle of~$e$ with respect to~$T$), because $\C$ and~$\B$ together~-- let alone the fundamental cycles and cuts together~-- do not generate all of~$\E$.

\section*{Acknowledgement}

I would like to thank Ebrahim Ghorbani and Ruben Melcher for independently pointing out the forward direction of Lemma~\ref{sp.tree bases}, which was not included in earlier drafts of this paper. The proof now included is Melcher's. Ghorbani also pointed out to me how some of the proofs could be rewritten in terms of matrices and their Smith normal form, as indicated at the end of the introduction.

\newpage

\bibliographystyle{plain}
\bibliography{collective}

\end{document}